
\documentclass{article}

\usepackage{microtype}
\usepackage{booktabs} 

\usepackage{hyperref}


\usepackage[]{algorithm}
\usepackage[noend]{algorithmic}
\usepackage{arxiv}

\usepackage[colorinlistoftodos,bordercolor=orange,backgroundcolor=orange!20,line
color=orange,textsize=scriptsize]{todonotes}
\catcode`\@=11
\catcode`\@=12

\usepackage{subcaption}

\usepackage{amsmath, amsthm, amssymb, epsfig, graphicx, float, url}

\theoremstyle{plain}
\newtheorem{theorem}{Theorem}

\newtheorem{corollary}[theorem]{Corollary}
\newtheorem{proposition}[theorem]{Proposition}

\theoremstyle{definition}

\newtheorem{remark}[theorem]{Remark}

\newcommand{\R}{\mathbb{R}}

\newcommand{\A}{\mathbb{A}}
\newcommand{\B}{\mathbb{B}}
\newcommand{\N}{\mathbb{N}}
\newcommand{\D}{\mathbb{D}}
\newcommand{\X}{\mathbb{S}}

\newcommand{\E}{\mathbb{E}}
\newcommand{\U}{\mathcal{U}}

\usepackage{bm}

\title{First-Order Methods for Wasserstein Distributionally Robust MDPs}
\author{%
   Julien Grand-Cl{\'e}ment \\
IEOR Department\\
Columbia University \\
   \texttt{julien.grand-clement@columbia.edu} \\
   \And
  Christian Kroer\\
  IEOR Department\\
Columbia University\\
  \texttt{christian.kroer@columbia.edu} \\
}

\begin{document}

\maketitle
\vspace{1cm}

\begin{abstract}
Markov decision processes (MDPs) are known to be sensitive to parameter specification.  Distributionally robust MDPs alleviate this issue by allowing for \emph{ambiguity sets} which give a set of possible distributions over parameter sets. The goal is to find an optimal policy with respect to the worst-case parameter distribution. We propose a framework for solving Distributionally robust MDPs via first-order methods, and instantiate it for several types of Wasserstein ambiguity sets. By developing efficient proximal updates, our algorithms achieve a convergence rate of $O\left(NA^{2.5}S^{3.5}\log(S)\log(\epsilon^{-1})\epsilon^{-1.5} \right)$ for the number of kernels $N$ in the support of the nominal distribution, states $S$, and actions $A$; this rate varies slightly based on the Wasserstein setup. Our dependence on $N,A$ and $S$ is significantly better than existing methods, which have a complexity of $O\left(N^{3.5}A^{3.5}S^{4.5}\log^{2}(\epsilon^{-1}) \right)$. Numerical experiments show that our algorithm is significantly more scalable than state-of-the-art approaches across several domains.
\end{abstract}


\section{Introduction}
In many applications of sequential decision-making problems, the dynamics of the environment can only be partially modeled, because of statistical errors and inaccurate distributional information regarding the parameters of the model.
This occurs, for example, in healthcare applications~\citep{grand2020robust,steimle-2} and vehicle routing~\citep{miao2017data}.
In \emph{Markov Decision Processes} (MDPs), this can be addressed using robust formulations, where the transition probabilities belong to a safety region called the \emph{uncertainty set}~\cite{Iyengar,Nilim,Kuhn,GGC}. However, robust MDPs often compute conservative policies, as they optimize only for the \textit{worst-case} kernel realization, without incorporating \textit{distributional} information about uncertainties.

\emph{Distributionally Robust MDPs} (DR-MDPs)~\citep{xu2010distributionally,yu2015distributionally} attempt to overcome the conservative nature of robust MDPs. In DR-MDPs the goal is to maximize the worst-case \emph{expected} reward, assuming that the distribution over the set of possible transition kernels is not known, but belongs to a so-called \textit{ambiguity set} consisting of all the possible measures over transition kernels. Robust MDPs can be viewed as a special case of DR-MDP, where the distribution over the set of possible kernels is restricted to Dirac masses. \citet{yang2017convex} introduces a Wasserstein ball formulation for ambiguity sets, shows the existence of an optimal policy that is Markovian, and gives a Value Iteration (VI) algorithm based on iterating a Bellman equation. 
Wasserstein distances have been shown to be particularly useful when the data is too sparse to use moment-based ambiguity sets~\citep{gao2016distributionally,esfahani2018data,zhao2018data}.

One drawback of the Value Iteration approach to solving DR-MDPs is that every iteration of the algorithm requires solving the associated Bellman equation. \citet{yang2017convex} shows that this Bellman equation can be reformulated as a finite-dimensional convex program with a max-min objective. 
In the special case of DR-MDP policies for Wasserstein balls with a finite number of states and actions and $s$-rectangular ambiguity sets, it is possible to derive a large conic convex program using standard optimization methods.
Letting $N$ be the number of kernels in the support of the nominal distribution over the set of possible kernels, $S$ the number of states, and $A$ the number of actions of the MDP, VI with such a conic convex reformulation (solved using standard interior-point methods) returns an $\epsilon$-optimal policy in $O\left( N^{3.5} A^{3.5} S^{4.5} \log^{2}(\epsilon^{-1}) \right)$  time, for Wasserstein uncertainty based on the $\ell_2$-metric.  The same complexity results hold for Wasserstein uncertainty based on $\ell_{1}$ and $\ell_{\infty}$ metric, see end of Section \ref{sec:WDRMDP}. This time complexity is largely due to the expensive per-iteration cost of interior-point methods. This may prove prohibitively slow when the MDP instance or the number of kernels is large.

In this paper, our goal is to design algorithms based on first-order methods (FOMs), which are typically more scalable (at the cost of lower precision in the final solution).  Recently, \citet{grand2020scalable} introduced FOMs to solve \emph{robust} MDPs. Their algorithms adapt FOMs for solving static zero-sum games to the dynamic setting of MDP. Interleaving FOM updates with approximate VI updates, the authors obtain an algorithm that improves significantly on VI, in terms of dependence on $S$ and $A$, at the price of a $O(1/\epsilon)$ convergence rate rather than $O(\log(1/\epsilon))$.

\noindent
\textbf{Our contributions}

\noindent \textit{A First-Order Method for  Distributionally Robust MDP.} We build upon the Wasserstein framework for DR-MDP of \citet{yang2017convex} and on the first-order framework of \citet{grand2020scalable}.
Our algorithmic framework interleaves first-order steps and approximate Bellman updates. Our algorithm generates a sequence of iterates $(\bm{x}_1,\bm{y}_1), \ldots,(\bm{x}_T,\bm{y}_T)$, each of which is a policy $\bm{x}_t$ and an uncertainty instantiation $\bm{y}_t$. The $t$'th iterate is generated based on a first-order update on iterate $t-1$. This is achieved by computing the gradients for the first-order updates based on the linear objective arising from a value-vector estimate. By carefully interleaving approximate Bellman updates on this value-vector estimate, we show that the \textit{average} of our generated \textit{policy} iterates constructs a solution to the DR-MDP problem whose duality gap decreases at a rate of $O(1/T^{2/3})$ after $T$ first-order updates.  Note that this is different from the usual convergence guarantees for VI,  which is on the \textit{last iterate value vector}.

Our algorithmic framework attains a $O(1/T^{2/3})$ convergence rate in terms of the number of FOM steps $T$. As is expected with FOMs, this is worse than the $\log(1/\epsilon)$ rate achieved by VI. However, our dependence on  $N,A$ and $S$ is better than VI by a factor of $O(N^{2.5}AS)$.

\vspace{2mm}
\noindent \textit{Novel proximal setup.}
A crucial component in our scheme is to show that the iterate FOM updates can be computed very cheaply (in nearly linear time) for various ambiguity sets of interest. This is crucial in practice, since even a moderate number of states $S$, actions $A$, and kernels in the nominal estimate $N$, leads to a large MDP, whose instance size is $O(NAS^{2})$.
Since Wasserstein distances rely on a choice of \textit{type} and \textit{metric} (see next section), we show how to instantiate our FOM framework for several such Wasserstein ambiguity sets. We cover metrics based on the norms $\ell_1, \ell_2,$ and $\ell_\infty$, as these are the most common found in the literature on Wasserstein distances. For each of these setups, we give novel algorithms that allow the proximal first-order iterates to be computed in nearly linear time.

Combining these proximal setups with our FOM framework yields an algorithm that, to the best of our knowledge, has the best convergence rates in terms of $N, S$ and $A$ for DR-MDPs with Wasserstein balls for any of the three metrics.

\vspace{2mm}
\noindent \textit{Empirical evaluation.}  We focus our numerical experiments on $\ell_2$-based Wasserstein balls. We consider random MDPs,  and applications to machine replacement and forest management. We compare our algorithms to four state-of-the-art Value Iteration algorithms (VI, Gauss-Seidel, Anderson, and Accelerated VI) and show that our algorithm is significantly faster. Even for small instances (e.g.  $S=10, N,A=30$ or $N=10$ and $S,A=30$), our algorithm is at least twice  as fast as Value Iteration.  As instances get larger (both in terms of states/actions or number of observed kernels), our algorithm becomes much faster than all the VI variants.

\noindent
\textbf{Related works}

\noindent\textit{Faster algorithms for MDPs.} Accelerating the convergence rate of VI for regular MDPs has been studied extensively, e.g. in \citet{ref-a} and \citet{GGC-AVI}. For robust MDPs, fast Bellman updates can be computed for $s,a$-rectangular uncertainty sets \citep{Iyengar,Nilim} and $s$-rectangular uncertainty sets (see \citet{Ho} for $d_{1}$-based uncertainty set).  However, none of these algorithms extend directly to a setup with $N \geq 2$ kernels in the support of the nominal distribution, and they do not modify the Value Iteration algorithm itself. \citet{grand2020scalable} develop a FOM framework which outperforms value iteration for robust MDPs, when the size of the MDP instance is large. While this improves upon VI for large instances of \textit{robust MDPs}, their methods do no directly extend to $N \geq 2$ (i.e. to distributionally robust MDPs) nor to Wasserstein balls.  Exploiting the \textit{linear programming} formulation of \textit{non-robust} MDP,  \citet{gong2020duality} and \citet{jin2020efficiently} propose to adapt mirror descent algorithms to solve MDPs.  There is no known linear programming reformulation for \textit{robust} and \textit{distributionally robust} MDPs.
Finally, our work differs from value function approximation~\citep{tsitsiklis1997analysis,de2003linear,petrik2010optimization,tamar2014scaling} in that we can control the desired accuracy of our inexact updates, contrary to value function approximation once the basis on the chosen subspace of functions is fixed. Additionally, unlike value function approximation, our algorithm improves convergence time even when the number of states and actions remain small, if there is a large number of kernels $N$.

\noindent\textit{Distributionally Robust MDPs.} DR-MDPs were introduced in \citet{xu2010distributionally}. \citet{yu2015distributionally} considerably extend the expressiveness of the ambiguity sets (to e.g. mean absolute deviation and confidence sets) by using lifting methods developed in \citet{wiesemann2014distributionally}.
 \citet{yang2017convex} introduces Wasserstein DR-MDPs and presents a reformulation of the robust Bellman update based on Kantorovitch duality; however, the author appeals to general convex programming to solve the resulting min-max problem, which may not be tractable without exploiting further problem structure or reformulation. Our approach builds on the robust Bellman formulation of \citet{yang2017convex} by combining it with a tractable first-order setup. The authors in \citet{chen2019distributionally} combine various ambiguity sets (among others moments, $\phi$-divergences, and Wasserstein distances) and give a conic formulation for the Bellman equation for this combination of ambiguity sets.

\noindent
\textbf{Notation}
We let $P(X)$ be the set of all Borel probability measures on a set $X.$ For $n \in \N$, $\Delta(n)$ is the probability simplex of dimension $n$. For $S,A \in \N$, we let $\U=\left( \Delta(S) \right)^{A}$ be the Cartesian product of probability simplexes over states. 


\section{Distributionally Robust MDP}\label{sec:prelim}
A Distributionally Robust MDP (DR-MDP) is a tuple $(\X,\A, \bm{c},\bm{p}_{0},\lambda,\D)$; $\X$ is the set of states and $\A$ is the set of actions. We assume a \textit{finite} set of states and actions: $|\X| = S < + \infty, |\A| = A < + \infty$.  There is a state-action cost $\bm{c} \in \R^{| \X| \times | \A| }$, an initial distribution over the set of states $\bm{p}_{0} \in \Delta(S)$ and a discount factor $\lambda$. The transition rates $(\bm{y}_{sa})_{s,a} \in (\Delta(S))^{S \times A}$ are unknown; instead, we assume that they follow a joint probability distribution $\mu$, which is known to belong to an \emph{ambiguity set} $\D$. This distribution $\mu$ is typically estimated from historical data (see next section). The goal of the decision maker is to compute a policy $\bm{x}$ in $\Pi = \left(\Delta(A) \right)^{S}$, which maps each state $s$ to a distribution  over actions, so as to minimize the worst-case infinite-horizon discounted cost, defined as $C(\bm{x},\mu) = \E_{\bm{x}} \E_{\bm{y} \sim \mu} [\sum_{t=0}^{+ \infty} \lambda^{t} c_{s_{t}a_{t}} | s_{0} \sim \bm{p}_{0} ]$. Specifically, we want to solve
\begin{equation}\label{eq:DR-MDP}
\min_{\bm{x} \in \Pi} \max_{\mu \in \D} C(\bm{x},\mu).
\end{equation}
We focus on the case of $s$\emph{-rectangular} ambiguity, where the uncertainty about transitions is independent across states. Formally,
$ \D = \{ \mu \; | \; \mu = \bigotimes \mu_{s}, \mu_{s} \in \D_{s}, \forall  s \in \X\},$
where for each state $s \in \X$ the set $\D_{s}$ is a set of probability distributions over the parameters $(\bm{y}_{sa})_{a=1}^{A} \in \left( \Delta(S) \right)^{A}$ and $\bigotimes$ stands for the product over measures. 
This is a standard assumption in the literature, as related transition rates across different states lead to intractable problems in general \cite{Kuhn}.

As detailed in \citet{yu2015distributionally} and \citet{yang2017convex}, the \textit{value vector} $\bm{v}^{*}$ of a solution $(\bm{x}^{*},\mu^{*})$ to \eqref{eq:DR-MDP} satisfies the following Bellman equation:
\begin{equation}\label{eq:Bellman-DR-MDP}
v^{*}_{s} = \min_{\bm{x}_s \in \Delta(A)} \max_{\mu_{s} \in \D_{s}} \mathop{\E}_{\bm{y}_{s} \sim \mu_{s}} \left[ \sum_{a \in \A} x_{sa} \left( c_{sa} + \lambda \bm{y}_{sa}^{\top}\bm{v}^{*} \right) \right].
\end{equation}
Moreover,  $(\bm{x}^{*},\mu^{*})$ can be recovered as the optimal solutions in the right-hand min-max problem in \eqref{eq:Bellman-DR-MDP}.
Since $(\bm{x},\bm{y}) \mapsto \sum_{a \in \A} \bm{x}_{sa} \left( c_{sa} + \lambda \bm{y}_{sa}^{\top}\bm{v}^{*} \right)$ is bilinear, the Bellman equation depends on $\mu_{s}$ only through $\E_{\bm{y}_{s} \sim \mu_{s}} \left[ \bm{y}_{s} \right]$ \cite{yu2015distributionally}. By linearity of expectation, we may maximize over the set of possible expected values for $\bm{y}_s$ instead:
\begin{equation}\label{eq:Bellman-DR-MDP-B}
v^{*}_{s} = \min_{\bm{x} \in \Delta(A)} \max_{\bm{y}_{s} \in \B_{s}} \sum_{a \in \A} x_{sa} \left( c_{sa} + \lambda \bm{y}_{sa}^{\top}\bm{v}^{*} \right),
\end{equation}
where
$\B_{s} = \{ \bm{y}_{s} \; | \; \exists \; \mu_{s} \in \D_{s} \text{ s.t. } \bm{y}_s = \E_{\bm{\hat y}_{s} \sim \mu_{s}} \left[ \bm{\hat y}_{s} \right] \}$.

\subsection{Wasserstein Distributionally Robust MDP}\label{sec:WDRMDP}
We will investigate the case where the sets of densities $\D_s$ are defined by Wasserstein distances. For single-state distributionally robust optimization and chance-constrained problems, this distance has proved useful when the number of data points is too small to rely on moment estimation of the underlying distribution \citep{gao2016distributionally,esfahani2018data}. In particular, a Wasserstein ball contains both continuous and discrete distributions while balls based on $\phi$-divergences (e.g. Kullback-Leibler divergence) centered at a discrete distribution do not contain relevant continuous distributions. Additionally, $\phi$-divergences do not take into account the closeness of two distributions, contrary to Wasserstein distance. Finally, by choosing a \textit{metric} accordingly (see definition below), the Wasserstein distance can account for the underlying geometry of the space that the distributions are defined on.

Let us define Wasserstein distances and balls.
The Wasserstein distance $W_p(\mu,\nu_s)$ between two distributions $\mu$ and $\nu_s$ is defined with respect to a \emph{metric} $d$ and a type $p \in \N$ as 
\begin{align*}
W_{p}(\mu,\nu_{s}) = \min & \; \left(\E_{(x,y) \sim \kappa}  \left[ d(x,y)^{p} \right]\right)^{1/p} \\
& \kappa \in P(\U \times \U), \\
& \Pi_{1} \kappa = \mu, \Pi_{2} \kappa = \nu_{s}.
\end{align*}
where $\Pi_{1} \kappa$ and $\Pi_{2} \kappa$ are the first and second marginals for a density $\kappa$ on $\U \times \U$.
When $p \rightarrow + \infty$, we have the pointwise convergence $W_{p} \rightarrow W_{\infty}$ \cite{givens1984class} where
\begin{align*}
W_{\infty}(\mu,\nu_{s}) = \min & \; \kappa\textrm{-ess.sup}(d) \\
& \kappa \in P(\U \times \U), \\
& \Pi_{1} \kappa = \mu, \Pi_{2} \kappa = \nu_{s},
\end{align*}
with $\kappa\textrm{-ess.sup}(d)$ defined as 
\[ \inf \{ c \in \R \; | \; \kappa \left( \{ (x,y) \; | \;  d( x,y)) > c \} \right) = 0 \}.\]
 We will be interested in the norm-based metrics $d_{1}=\ell_1, d_{2}=\ell_2$ and $d_{\infty}=\ell_\infty$.

We assume that we have a nominal estimate $\nu \in \D$ of the distribution over the transition rates.  Additionally, we assume that $\nu$ has finite support, i.e. for each $s$, 
$ \nu_{s} = (1/N) \sum_{i=1}^{N} \delta_{\bm{\hat{y}}_{i,s}}. $ This occurs, for example, when $\nu$ is the empirical distribution over $N$ samples of the transition kernels, obtained from observed, historical data \cite{yang2017convex}.
The ambiguity set $\D_{p,s}$ will be the set of all measures $\mu$ within some Wasserstein distance $W_p(\mu,\nu_s)$ of the nominal estimate:
\begin{equation}\label{eq:ambiguity-set-wasserstein}
\D_{p,s} = \{ \mu \in P(\U) | W_{p}(\mu,\nu_{s}) \leq \theta^{p} \}.
\end{equation}
In a small abuse of notation, we will let $\D_{\infty,s}$ denote the Wasserstein ball \eqref{eq:ambiguity-set-wasserstein} based on $W_{\infty}$ instead of $W_{p}$, with a radius of $\theta$.
Given a metric $d$, and $p \in \R \bigcup \{ \infty \}$,  the set of expected kernels for the measures $\mu$ in the Wasserstein ambiguity sets $\D_{p,s}$ can be described as~\citep{yang2017convex,bertsimas2018data,xie2020tractable}:
\begin{align*}
\B_{p,s} & = \{ \dfrac{1}{N} \sum_{i=1}^{N} \bm{y}_{i}| \dfrac{1}{N} \sum_{i=1}^{N} d(\bm{y}_{i}, \bm{\hat{y}}_{i})^{p} \leq \theta^{p}, \bm{y}_{i} \in \U, \forall \; i \},  \\
\B_{\infty,s} & = \{\dfrac{1}{N} \sum_{i=1}^{N} \bm{y}_{i} | d(\bm{y}_{i}, \bm{\hat{y}}_{i}) \leq \theta, \bm{y}_{i} \in \U, \forall i=1,...,N\}.
\end{align*}
\paragraph{Computing an optimal policy}
\citet{yang2017convex} shows that for Wasserstein balls (with $p < + \infty$), there exists an optimal policy which is stationary and Markovian; we present a proof of this result for $p=+\infty$ in our Appendix \ref{app:p-infinity}. \citet{yang2017convex} also gives a \textit{Value Iteration} algorithm to compute an optimal value vector $\bm{v}^{*}$ by iterating the Bellman equation. In particular, let $F: \R^{S} \rightarrow \R^{S}$ be the Bellman operator
\begin{equation}\label{eq:Bellman-operator}
F(\bm{v})_{s}=\min_{\bm{x}_{s} \in \Delta(A)} \max_{\bm{y}_{s} \in \B_{p,s}} \sum_{a \in \A} x_{sa} \left( c_{sa} + \lambda \bm{y}_{sa}^{\top}\bm{v} \right), \forall \; s \in \X.
\end{equation}
The Value Iteration (VI) algorithm is defined as follow:
\begin{equation}\label{alg:VI}\tag{VI}
\bm{v}_{0} \in \R^{S}, \bm{v}_{\ell+1} = F(\bm{v}_{\ell}), \forall \; \ell \geq 0.
\end{equation}
$F$ is a contraction of factor $\lambda$  and
\ref{alg:VI} returns a sequence $(\bm{v}_{\ell})_{\ell \geq 0}$ such that
$\| \bm{v}^{\ell+1} - \bm{v}^{*} \|_{\infty} \leq \lambda \cdot \| \bm{v}^{\ell} - \bm{v}^{*} \|_{\infty}, \forall \; \ell \geq 0$; an $\epsilon$-optimal policy and distribution over kernels can be computed as the pair attaining the $\min \max$ in $F(\bm{v})$, if $\| \bm{v} - F(\bm{v}) \|_{\infty} < 2\lambda \epsilon (1-\lambda)^{-1}$ \citep{Kuhn}.

In Appendix \ref{app:Belman-update}, we show that \eqref{eq:Bellman-operator} can be reformulated as a convex program by invoking convex duality twice. Thus, using an Interior Point Method (IPM), $F(\bm{v})$ can be computed in $O(N^{3.5}A^{3.5}S^{3.5}\log(\epsilon^{-1}))$ arithmetic operations (\citet{BenTal-Nemirovski}, Section 4.6.1-4.6.2), for $d=d_{1}, d_{2}, d_{\infty}$. This leads to an overall complexity for Value Iteration to return an $\epsilon$-optimal policy in $O(N^{3.5}A^{3.5}S^{4.5}\log^{2}(\epsilon^{-1}))$, which can be prohibitively large when the number of kernels, states, and actions grows.

\section{First-Order Methods for Wasserstein DR-MDP}
Our algorithm builds upon \eqref{alg:VI}, but avoids repeatedly solving expensive convex programs. At every VI epoch $\ell \geq 1$ (we refer to VI iterations as \emph{epochs} to distinguish from FOM iterations), we have a value vector $\bm{v}^{\ell}$ and we use a FOM to compute an approximation of the Bellman update $F(\bm{v}^{\ell})$. At  VI epoch $\ell+1$, we use our approximate solution to $F(\bm{v}^{\ell})$ to warm-start the computation of an approximation to $F(\bm{v}^{\ell+1})$. We will show that the (weighted) average of the FOM strategies across \emph{all} epochs converges to a solution to the Distributionally-Robust MDP problem \eqref{eq:DR-MDP}. 

It is important to note that our scheme is very different from the following simpler approach: run \eqref{alg:VI}, but use a FOM (instead of interior point methods) to solve each of the Bellman-equation problems. This would only converge in terms of the \textit{value vector},  rather than in terms of the duality gap guarantee that we provide for the average of all pairs of policy-kernel visited (see Theorem \ref{th:conv-T}). 
In particular, our analysis allows us to construct an average of \emph{all} iterates generated across $T$ FOM iterations and allows us to use this $T$ in our convergence guarantee.  

%
First, we rewrite the strategy space for the $\bm{y}$ player to explicitly be in terms of the individual components of the averaged vector $\bm{y}_{s} = \dfrac{1}{N} \sum_{i=1}^{N} \bm{y}_{i,s}$. Concretely, we rewrite $F(\bm{v})_{s}$ from \eqref{eq:Bellman-operator} as
\begin{equation}\label{eq:Bellman-operator-hat-B}
\min_{\bm{x}_{s} \in \Delta(A)} \max_{(\bm{y}_{1,s}, ..., \bm{y}_{N,s}) \in \tilde{\B}_{p,s}} \sum_{a \in \A} \bm{x}_{sa} \left( c_{sa} + \lambda \sum_{i=1}^{N} \dfrac{1}{N} \bm{y}_{i,sa}^{\top}\bm{v} \right),
\end{equation} 
for $\tilde{\B}_{p,s}  \subset \R^{N \times S \times A}$ defined as
\begin{equation}\label{eq:def-tilde-B}
\tilde{\B}_{p,s}  = \{( \bm{y}_{i})_{i=1}^{N}| \dfrac{1}{N} \sum_{i=1}^{N} d(\bm{y}_{i}, \bm{\hat{y}}_{i})^{p} \leq \theta^{p}, \bm{y}_{i} \in \U, \forall \; i \}.
\end{equation}
As we are now considering elements indexed by $i=1, ..., N$, for the sake of conciseness we will write  $( \bm{y}_{i})_{i}$ for $(\bm{y}_{i})_{i=1}^{N}$.
This strategy space representation will be easier to design FOMs for.

\paragraph{Proximal Setup for First-Order Methods.}
Let us fix a state $s \in \X$, for which we solve~\eqref{eq:Bellman-operator}. FOMs such as the one we consider rely on having a {\em proximal setup} for the convex and compact decision spaces $\Delta(A)$ (referred to as $X$ for simplicity in this section) and $\tilde{\B}_{s} $ (referred to as $Y$).

Using $\psi_X$, we construct the \emph{Bregman divergence} $D_X$, which measures a (pseudo) distance between any pair $\bm{x},\bm{x'} \in X$ ($D_Y$ is defined analogously):
\[D_{X}(\bm{x},\bm{x'}) = \psi_{X}(\bm{x'}) - \psi_{X}(\bm{x}) - \langle \nabla \psi_{X}(\bm{x}), \bm{x'} - \bm{x} \rangle,\]

The convergence rate depends on the {\em set widths} $\Theta_{X},\Theta_{Y}$, which are the maxima of $D_{X}$ and $D_{Y}$ on $X \times X$ and $Y \times Y$.
We will also require the maximum norm-magnitude $R_X = \max_{x\in X} \|x\|_X$, 
with $R_{Y}$ defined analogously. 

We will pay particular attention to the {\em Euclidean case}, where $(\| \cdot \|_{X}, \| \cdot \|_{Y}) = (\psi_X, \psi_Y) = (\ell_{2}, \ell_{2})$, though Algorithm~\ref{alg:PD-RMDP} applies more broadly (for example, a proximal setup with the $\ell_1$ norm is also possible). The Bregman divergences are 
\begin{align}
D_{X}(\bm{x},\bm{x}') & = \dfrac{1}{2} \| \bm{x} - \bm{x}' \|_{2}^{2}, \nonumber \\
D_{Y}((\bm{y}_{i})_{i},(\bm{y}'_{i})_{i}) & = \sum_{i=1}^{N} \dfrac{1}{2 } \| \bm{y}_{i} - \bm{y}'_{i} \|_{2}^{2}. \label{eq:bregman-div-y-player}
\end{align}

Given a proximal setup, a crucial component of the FOMs we are interested in is the {\em proximal mapping}, which can effectively be thought of as a generalization of taking a step from the previous iterate in the direction of improvement along the gradient $\bm{g}$:
\begin{align*}
  \textrm{prox}_x(\bm{g}_x, \bm{x'}_s) &= \arg\min_{\bm{x}_s \in X} \langle \bm{g}_x,\bm{x} \rangle + D_X(\bm{x}_s, \bm{x'}_s),\\
  \textrm{prox}_y(\bm{g}_y, \bm{y'}_s) &= \arg \max_{\bm{y}_s \in Y} \langle \bm{g}_y ,\bm{y}_s\rangle - D_{Y}(\bm{y}_s,\bm{y'}_s).
\end{align*}

These two proximal mapping are computed once per iteration of the algorithm, with varying inputs. A crucial issue for a practical scalable method is therefore whether these proximal mappings can be computed efficiently. As we will show later, this is indeed the case for several types of distributional uncertainty that are of practical interest.

\noindent\textbf{Primal-Dual update for MDP.} In this paper we focus on the primal-dual FOM from \citet{ChambollePock16}, which we refer to as PDA. Given the saddle-point formulation of \eqref{eq:Bellman-operator},  for some step sizes $\tau,\sigma \in \R$ and some vector $\bm{v} \in \R^{S}$, the Primal-Dual Algorithm (PDA) repeatedly applies proximal mappings as follows:
\begin{align}
\bm{x}^{t+1}_{s} &= \textrm{prox}_x(\tau \bm{c}^{t\prime}_{s}, \bm{x}^{t}_{s}), \label{eq:prox_update_x_simple}\\
(\bm{y}^{t+1}_{i,s})_{i} &= \textrm{prox}_y(\sigma \bm{\hat h}^t_s, (\bm{y}^{t}_{i,s})_i) \label{eq:prox_update_y_simple}
\end{align}
where $\bm{c}^{t\prime}_{s} \in \R^{A}, c^{t\prime}_{sa} = c_{sa}+ \lambda  \dfrac{1}{N} \sum_{i=1}^{N} \bm{y}_{i,s,a}^{t \; \top}\bm{v} ,$
and $\bm{\hat h}^t_s \in \R^{N\times A \times S}, h_{ias'} = - \dfrac{\lambda}{N} (2x^{t+1}_{sa}-x_{sa}^{t})v_{s'}$ for each $i, a$ and $s'$.
After $T$ iterations, PDA obtains a $O(1/T)$ approximation to a (static) saddle-point problem such as $F(\bm{v})$~\citep{ChambollePock16}.  Various weight schemes can be chosen to accelerate the ergodic convegence \cite{GKG20}. We now show how to combine PDA updates with VI in order to compute a solution to \eqref{eq:Bellman-DR-MDP}.

\noindent\textbf{Algorithm for DR-MDP.}
Our algorithm builds upon the first-order framework introduced in \citet{grand2020scalable} for robust MDP. In particular, the horizon $T$ is divided into $k$ \textit{epochs} of lengths $1, ..., k^{2}$. During epoch $\ell$, we perform $\ell^{2}$ PDA \textit{iterations}, starting from the last policy-kernel pair computed at the previous epoch. The average of the policy-kernel pairs visited across \emph{all} epochs converges to an optimal solution of the distributionally robust MDP problem,  as shown in Theorem \ref{th:conv-T}. Our Algorithm~\ref{alg:PD-RMDP} is different from the algorithm proposed in \citet{grand2020scalable} for \textit{robust} MDP, which only optimizes for a single kernel.
This is because we  must iterate over an $N$-tuple of kernels $(\bm{y}_{1}, ..., \bm{y}_{N})$ for the max-player.
To better understand the distinction between the algorithms, note that one could apply the algorithm of \cite{grand2020scalable} directly to (\ref{eq:Bellman-operator}) since that formulation has a single $\bm{y}$. However, it is not clear how one would set up an appropriate strongly-convex function $\psi_{\B_{p,s}}$ for this space, as it suffers from degeneracy issues where the same average kernel $\bm{y}$ can be represented by multiple combinations of the samples $\bm{y}_{1}, ..., \bm{y}_{N}$.
In contrast, we will show that there are efficient proximal setups for our representation in terms of $\tilde\B_{p,s}$. Our choice of step sizes $\tau$ and $\sigma$ also specifically addresses the dimension imbalance between the $\min$-player decisions $\bm{x} \in \R^{A}$ and the $\max$-player decisions $\left(\bm{y}_{i} \right)_{i} \in \R^{NAS}$.
 
\begin{algorithm}[H]
\caption{First-order Method for Wasserstein DR-MDP}\label{alg:PD-RMDP}
\begin{algorithmic}[1]
\STATE \textbf{Input} A number of epochs $k$.\\

\STATE \textbf{Initialize} $\bm{v}^{1},\bar{\bm{x}}^{0}, \bar{\bm{y}}^{0}$ at random

\FOR{epoch $ \ell=1,..., k$}
\FOR{$s \in \X$} 
\STATE $\tau = \left( \sqrt{A} \lambda \| \bm{v}^{\ell} \|_{2} \right)^{-1}, \sigma = N \sqrt{A} (\lambda \| \bm{v}^{\ell} \|_{2})^{-1}$
\STATE $\tau_{\ell} = \sum_{k'=1}^{(\ell-1)^{2}}k'$
\FOR{$t = \tau_\ell, \ldots, \tau_\ell+T_\ell$}
\STATE $\bm{x}_s^{t+1} = \textrm{prox}_x(\tau\bm{c}^{t\prime}_{s}, \bm{x}^t_s)$
\STATE $(\bm{y}^{t+1}_{i,s})_{i} = \textrm{prox}_y(\sigma\bm{\hat h}^t_s, (\bm{y}^{t}_{i,s})_i)$
\ENDFOR
\STATE $S_\ell = \sum_{t = \tau_\ell}^{\tau_{\ell}+\ell^2} t$ \label{alg:normalization const}
\STATE $(\bar{\bm{x}}^{\ell}_{s}, (\bar{\bm{y}}^{\ell}_{i,s})_{i}) = \sum_{t=(\ell+1)}^{\tau_\ell+\ell^2} \frac{t}{S_\ell} (\bm{x}_{t},(\bm{y}_{t,i})_{i})$
\label{alg:PD-compute-averages}
\STATE Compute $ \bar{\bm{y}}^{\ell}_{s} \in \B_{s}$ as $\bar{\bm{y}}^{\ell}_{s} = \dfrac{1}{N} \sum_{i=1}^{N}  \bar{\bm{y}}^{\ell}_{i,s}$
\STATE  Update $v^{\ell+1}_{s} = F^{\bar{\bm{x}}^{\ell}_{s},\bar{\bm{y}}^{\ell}_{s}}(\bm{v}^{\ell})_{s}$ \label{alg:PD-udate-v-ell}
	 \ENDFOR
\ENDFOR
\STATE Let $S_T = \sum_{t = 1}^{T} t$
\STATE \textbf{Output} $(\bar{\bm{x}}^{T}_{s}, (\bar{\bm{y}}^{T}_{i,s})_{i}) = \sum_{t=1}^{T} \frac{t}{S_T} (\bm{x}_{t},(\bm{y}_{t,i})_{i})$
\end{algorithmic}
\end{algorithm}
Algorithm~\ref{alg:PD-RMDP} guarantees a bound on the \textit{duality gap} of a policy-kernel pair $(\bm{x}, \bm{y})$ defined as 
\begin{equation}\label{eq:duality-gap-in-bellman}
\max_{s \in \X} \{ \max_{\bm{y'} \in \B_{s}} F^{\bm{x}, \bm{y'}}(\bm{v}^{*})_{s} - \min_{\bm{x'} \in \Delta(A)} F^{\bm{x'}, \bm{y}}(\bm{v}^{*})_{s} \},
\end{equation}
where
$ F^{\bm{x}, \bm{y}}(\bm{v})_{s}=\sum_{a \in \A} x_{sa} \left( c_{sa} + \lambda \bm{y}_{sa}^{\top}\bm{v} \right).$ Note that \eqref{eq:duality-gap-in-bellman} $\leq \epsilon/2$ guarantees that $\bm{x}$ is a $\epsilon$-optimal policy in \eqref{eq:DR-MDP}.
We give a detailed proof of our theorem in Appendix \ref{app:proof-main-th}.
\begin{theorem}\label{th:conv-T}
Let $\bm{v}^{*}$ be the value vector for a pair $\bm{x}^{*},\bm{y}^{*}$ of optimal solutions to the Bellman equation \eqref{eq:DR-MDP}.
Let $\bar{\bm{x}}^{T},\bar{\bm{y}}^{T}$ the output of Algorithm \ref{alg:PD-RMDP} after $T$ iterations.

Then the duality gap \eqref{eq:duality-gap-in-bellman} of $\bar{\bm{x}}^{T},\bar{\bm{y}}^{T}$ is upper bounded by $ O\left( \dfrac{\sqrt{S}}{\sqrt{N}}R_{X}R_{Y}\left( \dfrac{\Theta_{X}}{\tau} + \dfrac{\Theta_{Y}}{\sigma} \right) \dfrac{1}{T^{2/3} }\right).$
\end{theorem}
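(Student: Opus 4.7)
The plan is to combine a Chambolle--Pock style one-step inequality with the Bellman contraction across epochs, where the epoch-length schedule $T_\ell=\ell^2$ and the weights $t$ in the averaging are precisely tuned to balance the two sources of error. First I would record, for a \emph{fixed} value vector $\bm{v}$, the standard PDA bound: for the bilinear saddle objective $F^{\bm{x},\bm{y}}(\bm{v})_s$, the coupling operator $L(\bm{v})$ satisfies $\|L(\bm{v})\|_{2\to 2}\leq \lambda\|\bm{v}\|_{2}/\sqrt{N}$ (using Cauchy--Schwarz on the average over the $N$ sample kernels), so the choice $\tau\sigma=N/(\lambda^{2}\|\bm{v}^\ell\|_{2}^{2})$ satisfies the PDA step-size condition $\tau\sigma\|L\|^{2}\le 1$. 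This yields, for any comparators $(\bm{x}',\bm{y}')\in \Delta(A)\times \tilde{\B}_{p,s}$, the per-iterate inequality
\[
F^{\bm{x}^{t+1},\bm{y}'}(\bm{v})_s-F^{\bm{x}',\bm{y}^{t+1}}(\bm{v})_s\le \tfrac{D_X(\bm{x}',\bm{x}^{t})-D_X(\bm{x}',\bm{x}^{t+1})}{\tau}+\tfrac{D_Y(\bm{y}',\bm{y}^{t})-D_Y(\bm{y}',\bm{y}^{t+1})}{\sigma}.
\]

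Next I would sum this inequality across the iterates $t\in\{\tau_\ell,\dots,\tau_\ell+\ell^{2}\}$ within epoch $\ell$ (evaluated at $\bm{v}=\bm{v}^{\ell}$) weighted by $t$, using a telescoping argument on the Bregman terms together with the widths $\Theta_X,\Theta_Y$. Because the weights $t$ multiply the differences $D_X(\cdot,\bm{x}^{t})-D_X(\cdot,\bm{x}^{t+1})$, an Abel summation introduces terms of the form $\sum_t D_X(\cdot,\bm{x}^{t})\le T_\ell\,\Theta_X$, and the same for $D_Y$. Dividing by $S_\ell=\Theta(\ell^{4})$ and applying Jensen's inequality to push the averaging inside $F^{\cdot,\cdot}$ produces an $O(\ell^{2}(\Theta_X/\tau+\Theta_Y/\sigma)/S_\ell)=O(\ell^{-2}(\Theta_X/\tau+\Theta_Y/\sigma))$ bound on the per-epoch gap for $F(\bm{v}^\ell)$.

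Then I would handle the drift between epochs. Because line~\ref{alg:PD-udate-v-ell} sets $\bm{v}^{\ell+1}_s=F^{\bar{\bm x}^{\ell}_s,\bar{\bm y}^{\ell}_s}(\bm{v}^\ell)_s$ rather than $F(\bm{v}^\ell)_s$, the previous step yields $\|F(\bm{v}^\ell)-\bm{v}^{\ell+1}\|_\infty\le \epsilon_\ell$ for $\epsilon_\ell=O(\ell^{-2}(\Theta_X/\tau+\Theta_Y/\sigma))$, and combining this with the $\lambda$-contraction of $F$ gives the recursion $\|\bm{v}^{\ell+1}-\bm{v}^{*}\|_\infty\le \lambda\|\bm{v}^\ell-\bm{v}^*\|_\infty+\epsilon_\ell$, so that the epoch-value error is controlled by $\sum_{\ell'\le\ell}\lambda^{\ell-\ell'}\epsilon_{\ell'}$. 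To pass from gap bounds for $F(\bm{v}^\ell)$ to the target duality gap \eqref{eq:duality-gap-in-bellman} at $\bm{v}^*$, I would use a Lipschitz argument in $\bm{v}$: replacing $\bm{v}^\ell$ by $\bm{v}^*$ in $F^{\bm{x},\bm{y}}$ costs at most $\lambda R_X R_Y \|\bm{v}^\ell-\bm{v}^*\|_2\le \lambda R_X R_Y\sqrt{S}\|\bm{v}^\ell-\bm{v}^*\|_\infty$, which is where the $\sqrt{S}$ enters. The $1/\sqrt{N}$ factor is inherited from the operator-norm calculation for $L(\bm{v})$ above.

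Finally I would aggregate across epochs using the global weights $t/S_T$, where $S_T=\Theta(T^2)$ and $T=\sum_{\ell=1}^{k}\ell^{2}=\Theta(k^{3})$. The weighted sum of per-epoch errors is $\sum_{\ell=1}^{k}\ell^{4}\cdot\ell^{-2}(\Theta_X/\tau+\Theta_Y/\sigma)=\Theta(k^{3}(\Theta_X/\tau+\Theta_Y/\sigma))$, which after dividing by $S_T=\Theta(k^{6})$ yields $O(k^{-3}(\Theta_X/\tau+\Theta_Y/\sigma))=O(T^{-1}(\Theta_X/\tau+\Theta_Y/\sigma))$ for the pure PDA contribution. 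The dominant term is the drift error coming from $\sum_{\ell}\ell^{4}\|\bm{v}^\ell-\bm{v}^*\|_\infty$, which after carrying the geometric-series upper bound of the recursion through the weighted average balances at the stated $T^{-2/3}$ rate once one observes $k=T^{1/3}$ and multiplies by the $\sqrt{S}R_XR_Y/\sqrt{N}$ Lipschitz/operator-norm factor.

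The main technical obstacle will be the last step: tracking the drift of $\bm{v}^\ell$ through the weighted ergodic average without losing factors that would spoil the $T^{-2/3}$ rate. Concretely, one must show that the geometric decay from the $\lambda$-contraction exactly cancels the growth of the weights $\ell^{4}$ sufficiently to leave only an $O(k^{-2})$ residual, rather than the naive $O(k^{-1})$ one would get from a uniform bound; this is what dictates the particular choice of epoch lengths $\ell^{2}$ and per-iterate weights $t$, and is where the argument differs materially from the single-kernel analysis of \citet{grand2020scalable}, since the strong-convexity constants on $\tilde{\B}_{p,s}$ and the induced widths $\Theta_Y$ scale with $N$ in a way that must be cancelled by the $\sigma\propto N$ step size.
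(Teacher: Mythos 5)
Your proposal follows essentially the same route as the paper's proof: the paper also (i) derives the step sizes from the operator-norm bound $L=\lambda\|\bm{v}^{\ell}\|_{2}/\sqrt{N}$ obtained via Cauchy--Schwarz (this is exactly where the $1/\sqrt{N}$ enters), (ii) runs the epoch-interleaved PDA/Bellman-contraction analysis, extended to $N$ kernels by scaling $R_{Y}$ and $\Theta_{Y}$, and (iii) arrives at the intermediate bound $O\bigl(R_{X}R_{Y}(\Theta_{X}/\tau+\Theta_{Y}/\sigma)\tfrac{\sqrt{S}}{\sqrt{N}}(\lambda^{T^{1/3}}/T^{1/3}+1/T^{2/3})\bigr)$ before absorbing the exponentially decaying contraction term into $O(1/T^{2/3})$. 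The only real difference is that the paper imports the core epoch analysis as a black box from Theorem 3.1 of \citet{grand2020scalable} rather than unfolding it as you do, and your unfolded accounting contains a harmless bookkeeping slip: the weight of epoch $\ell$ in the global average is $\Theta(\ell^{5})$, not $\Theta(\ell^{4})$, because the iterate weights there are $t\approx\ell^{3}$, so the ergodic PDA contribution also lands at $O(T^{-2/3})$ (not $O(T^{-1})$) and is balanced with, rather than dominated by, the drift term --- none of which changes the stated rate.
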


Therefore, Algorithm~\ref{alg:PD-RMDP} returns a sequence of policies which converges to an optimal solution to the Distributionally Robust MDP over Wasserstein balls. In order to give the number of arithmetic operations for Algorithm \ref{alg:PD-RMDP} before returning an $\epsilon$-optimal policy, there remains to investigate the complexity of the proximal updates \eqref{eq:prox_update_x_simple}-\eqref{eq:prox_update_y_simple}.


\begin{remark} We could use other FOMs than PDA in Algorithm~\ref{alg:PD-RMDP}. For example, Mirror Prox would yield a similar rate~\cite{nemirovski2004prox}, while Mirror Descent would yield a slower rate.
 However, because the objective for our FOMs is bilinear and not \textit{strongly} convex-concave, it is not clear that we can use accelerated FOMs (e.g., Section 5 and Section 6 in \cite{ChambollePock16} to obtain a linear convergence rate.
\end{remark}

\section{Convergence rate for Wasserstein balls.}
Note that in Theorem \ref{th:conv-T}, we only provide a convergence rates in term of the number of PD iterations $T$. In order to obtain our complexity results, we now turn to investigating the complexity of the primal-dual updates \eqref{eq:prox_update_x_simple} and \eqref{eq:prox_update_y_simple}. The uncertainty set $\tilde{\B}_{p,s}$ is quite unusual in the first-order methods literature, where most of the updates are computed in closed-form upon the simplex or the non-negative orthant. One of the main contributions of this paper is to design novel efficient algorithms for computing \eqref{eq:prox_update_y_simple} when the metric $d$ is $d_{1}, d_{2}$ or $d_{\infty}$. In particular in Proposition \ref{prop:finite-type} we show that we can compute \eqref{eq:prox_update_y_simple} in nearly linear time. To the best of our knowledge, we are the first to present efficient algorithms for computing the proximal updates on intersection of simplices and (various) Wasserstein balls.

\noindent\textbf{Proximal setup for $\bm{x}$ player}
The proximal update for the $\bm{x}$ player \eqref{eq:prox_update_x_simple} is the classical proximal update onto the simplex of dimension $A$, and can be computed in $O(A\log(A))$ operations \citep{BenTal-Nemirovski}.

\noindent\textbf{Proximal setup for $\bm{y}$ player}
Since \eqref{eq:prox_update_y_simple} decomposes into independent problems for each state, we drop the index $s$ in our formulation of \eqref{eq:prox_update_y_simple} and assume that we are solving for some arbitrary state $s$.
For $p < + \infty$, the proximal update of the max player  \eqref{eq:prox_update_y_simple} from a kernel $\bm{y'}$ can be reformulated as
\begin{equation}\label{eq:prox-update-min-player-p}
\begin{aligned}
\min & \; \sum_{i=1}^{N} \langle \bm{y}_{i} , \bm{h} \rangle + \dfrac{1}{2 \sigma} \|\bm{y}_{i} - \bm{y'}_{i} \|_{2}^{2} \\
& \bm{y}_{1}, ..., \bm{y}_{N} \in \U, \\
&  \dfrac{1}{N} \sum_{i=1}^{N} d(\bm{y}_{i}, \bm{\hat{y}}_{i})^{p} \leq \theta^{p}.
\end{aligned}
\end{equation}
In the next propositions, we show that \eqref{eq:prox-update-min-player-p} can be solved efficiently, for $d$ equal to $d_{1}, d_{2}$ and $d_{\infty}$. The proof for each case is different, but follows a similar argument: 
\begin{enumerate}
\item We first introduce a Lagrange multiplier $\gamma$ for the last constraint. This simplifies the problem of computing \eqref{eq:prox-update-min-player-p} to solving $N$ sub-problems over $\U$, each of the form
\begin{equation}\label{eq:prox-update-min-player-p-lagrange}
\begin{aligned}
\min & \; \langle \bm{y}_{i} , \bm{h} \rangle + \dfrac{1}{2 \sigma}  \|\bm{y}_{i} - \bm{y'}_{i} \|_{2}^{2} +\gamma  \cdot d(\bm{y}_{i}, \bm{\hat{y}}_{i})^{p}  \\
& \bm{y}_{i} \in \U.
\end{aligned}
\end{equation}
\item
We then turn to efficiently solving \eqref{eq:prox-update-min-player-p-lagrange}.
\begin{itemize}
\item 
For $d=d_{2}, p=2$, \eqref{eq:prox-update-min-player-p-lagrange} can be rewritten as a series of Euclidean projections onto the simplex $\Delta(S)$, as $\U = \left( \Delta(S)\right)^{A}$.
\item For $d=d_{1}, p=1$, we introduce Lagrange multipliers $\alpha_{i,s,a}$ for each simplex constraint $\bm{y}_{i,s,a}^{\top}\bm{e} =1$; we can then solve the resulting problems using the KKT conditions.  By carefully inspecting the breakpoints of the Lagrangian for the multipliers $\alpha_{i,s,a}$, we do not need to use bisection to find the multipliers $\alpha_{i,s,a}$; see Appendix \ref{app:computing-FOM}.
\item Finally, for $d=d_{\infty}, p=1$, we use bisection to find an optimal $\alpha$ such that  $d(\bm{y}_{a}, \bm{\hat{y}}_{i,a}) \leq \alpha, $ for all $ a \in \A$. Then we solve the problem of Euclidean projection onto the simplex $\Delta(S)$ with box constraints.
\end{itemize}
\item Having designed efficient algorithms for solving \eqref{eq:prox-update-min-player-p-lagrange}, we use a bisection method on the multiplier $\mu$ and return an optimal solution of \eqref{eq:prox-update-min-player-p}.
\end{enumerate}
Summarizing the above ideas, we have the following proposition. We present the detailed proof in Appendix \ref{app:computing-FOM}.
 \begin{proposition}\label{prop:finite-type}
   Let $d=d_{2},p=2$ or $d=d_{1},p=1$.
The proximal update \eqref{eq:prox-update-min-player-p} can be computed in $O \left(NA S \log(S) \log(\epsilon^{-1}) \right)$ arithmetic operations.


Let $d= d_{\infty}, p=1$.
The proximal update \eqref{eq:prox-update-min-player-p} can be computed in $O \left(NA S \log(S)\log^{3}(\epsilon^{-1}) \right)$ arithmetic operations.
\end{proposition}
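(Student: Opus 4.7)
The plan is to attack \eqref{eq:prox-update-min-player-p} by Lagrangian relaxation of the Wasserstein ball constraint, followed by bisection on the resulting dual multiplier. Dualizing the constraint $\frac{1}{N}\sum_i d(\bm{y}_i,\hat{\bm{y}}_i)^p \leq \theta^p$ with a nonnegative multiplier $\gamma$ decouples the problem into $N$ independent per-kernel subproblems of the form \eqref{eq:prox-update-min-player-p-lagrange}. Since the objective is strongly convex, the constraint is convex, and the nominal kernels $\hat{\bm{y}}_i$ are strictly feasible (Slater), strong duality holds, and the value of $\frac{1}{N}\sum_i d(\bm{y}_i(\gamma),\hat{\bm{y}}_i)^p$ is a monotone nonincreasing function of $\gamma$. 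Thus I can identify the correct $\gamma^{*}$ by bisection in $O(\log(\epsilon^{-1}))$ evaluations of the decoupled subproblems, which is where the overall $\log(\epsilon^{-1})$ factor in the bound comes from.

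For the per-kernel subproblems I would handle the three metrics separately. For $d=d_2,\,p=2$, the regularizer $\gamma\|\bm{y}_i-\hat{\bm{y}}_i\|_2^2$ combines with the quadratic Bregman term into a single quadratic; completing the square reduces \eqref{eq:prox-update-min-player-p-lagrange} to a Euclidean projection onto $\U=(\Delta(S))^{A}$, which further splits into $A$ independent simplex projections, each solvable in $O(S\log S)$ by the standard sorting-based algorithm. For $d=d_1,\,p=1$, I would dualize each per-action simplex constraint $\bm{e}^\top \bm{y}_{i,s,a}=1$ with a multiplier $\alpha_{i,s,a}$; the KKT stationarity condition is then piecewise linear in $\bm{y}_{i,s,a}$ with breakpoints determined by the sign of $y_{i,s,a,s'}-\hat{y}_{i,s,a,s'}$ in each coordinate. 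Sorting the $S$ candidate breakpoints and sweeping through the resulting intervals lets me pinpoint the correct $\alpha_{i,s,a}$ in closed form, avoiding any inner bisection and keeping the per-subproblem cost at $O(S\log S)$. For $d=d_\infty,\,p=1$, I would bisect on the value $\alpha$ enforcing $d_\infty(\bm{y}_{i,a},\hat{\bm{y}}_{i,a})\leq\alpha$ uniformly in $a$; for each trial $\alpha$, the subproblem becomes a Euclidean projection onto the simplex intersected with the box $[\hat{y}_{i,s,a}-\alpha,\hat{y}_{i,s,a}+\alpha]$, solvable in $O(S\log S)$ by a sorted Lagrangian sweep that handles active box constraints. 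The two nested bisections (one for $\gamma$, one for $\alpha$), together with a possible third inside the box-constrained simplex projection, account for the additional $\log^2(\epsilon^{-1})$ factor in the $d_\infty$ complexity.

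Multiplying the costs together yields the announced bounds: $N$ subproblems $\times\;A$ per-action projections $\times\;O(S\log S)$ per projection $\times\;O(\log(\epsilon^{-1}))$ outer bisections gives $O(NAS\log(S)\log(\epsilon^{-1}))$ for the $d_1$ and $d_2$ cases, and an extra $\log^{2}(\epsilon^{-1})$ for the $d_\infty$ case.

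The main obstacle I expect is the $d_1,\,p=1$ analysis: showing that the breakpoints of the KKT system, as functions of $\alpha_{i,s,a}$, can be enumerated exactly by sorting the components of $\hat{\bm{y}}_{i,s,a}$ and $\bm{y}'_{i,s,a}$, and that a single linear sweep then locates the optimal multiplier. Avoiding a third bisection here is essential, since otherwise the $d_1$ complexity would match the $d_\infty$ one rather than the faster $d_2$ one. A secondary but more routine hurdle is verifying that the outer bisection is well posed, that is, that the residual $\frac{1}{N}\sum_i d(\bm{y}_i(\gamma),\hat{\bm{y}}_i)^p-\theta^p$ is continuous and strictly monotone in $\gamma$ wherever the constraint is active, which follows from uniqueness of the per-kernel minimizers under strong convexity.
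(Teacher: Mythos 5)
Your proposal follows essentially the same route as the paper's proof: dualize the Wasserstein constraint with a multiplier $\gamma$ and bisect on it, reduce the $d_2$ case to $NA$ sorted simplex projections after completing the square, handle the $d_1$ case by introducing per-action simplex multipliers $\alpha_{i,a}$ and locating the optimal one via a sorted breakpoint sweep of the shrinkage-thresholding KKT solution (avoiding an inner bisection), and handle the $d_\infty$ case by an additional bisection on the box radius followed by a box-constrained simplex projection, which accounts for the extra $\log^2(\epsilon^{-1})$ factor. The decomposition, the key breakpoint-enumeration idea for $d_1$, and the accounting of the nested bisections all match the paper's argument.
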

We can now give the overall convergence rates of our algorithms in the following theorem.
\begin{theorem}\label{th:conv-rates}
  The total number of arithmetic operations needed to compute an $\epsilon$-optimal solution to the Distributionally Robust MDP problem \eqref{eq:DR-MDP} using Algorithm \ref{alg:PD-RMDP} is $O \left(NA^{2.5}S^{3.5}\log(S)\log^{m}(\epsilon^{-1})\epsilon^{-1.5} \right)$,
where $m=1$ for $d=d_{2}$ and $p \in \{2,+\infty\}$, $d=d_{1}$ and $p \in \{1,+\infty\}$, and $m=3$ for $d=d_{\infty}$ and $p \in \{1,+\infty\}$.
\end{theorem}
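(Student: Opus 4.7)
The plan is to assemble the two previously stated results, Theorem~\ref{th:conv-T} and Proposition~\ref{prop:finite-type}, by instantiating the abstract constants of the Euclidean proximal setup, inverting the duality-gap bound to find the number $T$ of FOM iterations needed to reach accuracy $\epsilon$, and finally multiplying by the per-iteration cost.

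First, I would tabulate the Euclidean constants. For $X=\Delta(A)$ with $\psi_X=\tfrac12\|\cdot\|_2^2$ one has $R_X\le 1$ and $\Theta_X\le 1$. For $Y=\tilde{\B}_{p,s}$ with the Bregman divergence \eqref{eq:bregman-div-y-player}, each $\bm{y}_i\in(\Delta(S))^A$ satisfies $\|\bm{y}_i\|_2\le\sqrt{A}$ and $\|\bm{y}_i-\bm{y}_i'\|_2^2\le 2A$; summing over the $N$ samples yields $R_Y\le\sqrt{NA}$ and $\Theta_Y\le NA$. I would then bound $\|\bm{v}^\ell\|_2$ uniformly in $\ell$: because the update $v^{\ell+1}_s=F^{\bar{\bm{x}}^\ell_s,\bar{\bm{y}}^\ell_s}(\bm{v}^\ell)_s$ is a convex combination of the affine maps $\bm{v}\mapsto c_{sa}+\lambda\bar{\bm{y}}^\ell_{sa}{}^{\!\top}\bm{v}$, the hypercube $[0,c_{\max}/(1-\lambda)]^S$ is invariant, so $\|\bm{v}^\ell\|_2\le O(\sqrt{S}/(1-\lambda))$. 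Plugging the step sizes $\tau=(\sqrt{A}\lambda\|\bm{v}^\ell\|_2)^{-1}$ and $\sigma=N\sqrt{A}(\lambda\|\bm{v}^\ell\|_2)^{-1}$ from Algorithm~\ref{alg:PD-RMDP} into $\Theta_X/\tau+\Theta_Y/\sigma$ balances the two terms at $\Theta(\sqrt{A}\lambda\|\bm{v}^\ell\|_2)$.

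Substituting into Theorem~\ref{th:conv-T} then yields
\[\textrm{duality gap}\;\le\;O\!\left(\sqrt{\tfrac{S}{N}}\cdot\sqrt{NA}\cdot\sqrt{A}\cdot\tfrac{\sqrt{S}}{1-\lambda}\cdot T^{-2/3}\right)\;=\;O\!\left(\tfrac{SA}{(1-\lambda)T^{2/3}}\right),\]
so $T=O\bigl((SA)^{3/2}\epsilon^{-3/2}\bigr)$ PDA iterations per state are sufficient. By Proposition~\ref{prop:finite-type}, each such iteration costs $O\bigl(NAS\log(S)\log^{m}(\epsilon^{-1})\bigr)$ operations, dominated by the $\bm{y}$-proximal step (the $\bm{x}$-update is $O(A\log A)$ and assembling $\bm{c}^{t\prime}$ and $\bm{\hat h}^t$ is $O(NAS)$), with the exponent $m$ dictated solely by the underlying metric. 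Since Algorithm~\ref{alg:PD-RMDP} loops over the $S$ states, the total number of arithmetic operations is
\[S\cdot T\cdot O\!\bigl(NAS\log(S)\log^{m}(\epsilon^{-1})\bigr)\;=\;O\!\left(NA^{5/2}S^{7/2}\log(S)\log^{m}(\epsilon^{-1})\epsilon^{-3/2}\right),\]
which is exactly the claimed bound. The Bellman-update cost $F^{\bar{\bm{x}}^\ell,\bar{\bm{y}}^\ell}(\bm{v}^\ell)_s=O(AS)$ per state and epoch is negligible.

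The main obstacle is the epoch-uniform control of $\|\bm{v}^\ell\|_2$: because the step sizes are chosen adaptively from $\bm{v}^\ell$, without such a bound the constants hidden in the $O(\cdot)$ notation could grow with $\ell$ and the aggregated $T^{-2/3}$ rate given by Theorem~\ref{th:conv-T} across epochs would deteriorate. The argument is short but must note that the inexact operator $F^{\bar{\bm{x}}^\ell,\bar{\bm{y}}^\ell}$ is still a $\lambda$-contraction preserving the natural value box, then apply induction on $\ell$. Once this bound is in hand, combining Theorem~\ref{th:conv-T}, Proposition~\ref{prop:finite-type}, and the outer loop over states is purely arithmetic.
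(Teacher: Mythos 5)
Your proposal is correct and follows essentially the same route as the paper: instantiate the Euclidean proximal constants ($R_X,\Theta_X=O(1)$, $R_Y=O(\sqrt{NA})$, $\Theta_Y=O(NA)$), use the step sizes to get $\Theta_X/\tau+\Theta_Y/\sigma=O(\sqrt{AS})$, invert the $O(SA/T^{2/3})$ gap bound from Theorem~\ref{th:conv-T} to get $T=O((SA)^{3/2}\epsilon^{-3/2})$, and multiply by the per-iteration cost of Proposition~\ref{prop:finite-type} and the outer loop over $S$ states. Your explicit epoch-uniform bound on $\|\bm{v}^\ell\|_2$ via invariance of the value box is a slightly more careful justification of the step the paper dispatches with ``norm equivalence between $\|\cdot\|_2$ and $\|\cdot\|_\infty$,'' but it is the same argument.
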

\begin{proof}
We show here our proof for $d=d_{2}$ and $p \in \{2,+\infty\}$,
and $d=d_{1}$ and $p \in \{1,+\infty\}$; the proof for $d=d_{\infty}$ and $p \in \{1,+\infty\}$, follows the same argument.

For our choice of $\| \cdot \|_{X}, \| \cdot \|_{Y}$, Bregman divergences and step sizes we have (see \citet{BenTal-Nemirovski})
\begin{itemize}
\item $R_{X} = O(1), R_{Y} = O(\sqrt{NA})$,  
\item $\Theta_{X} = O(1), \Theta_{Y} = O(NA)$,
\item $\Theta_{X}/\tau =\Theta_{Y}/\sigma= \sqrt{A} \lambda \| \bm{v}^{\ell} \|_{2} = O \left( \sqrt{AS} \right),$
\end{itemize}
where we have used the norm equivalence between $\| \cdot \|_{2}$ and $\| \cdot \|_{\infty}$ in $\R^{S}$ in the last two lines.
%
Therefore following Theorem \ref{th:conv-T} we have that the duality gap \eqref{eq:duality-gap-in-bellman} of the policy returned by Algorithm \ref{alg:PD-RMDP} after $T$ PD iterations is bounded above by 
$O \left(  \dfrac{SA}{T^{2/3}} \right)$.
Each PD iteration for these choices of $d$ and $p$ can be computed in 
$O \left(NA S \log(S) \log(\epsilon^{-1}) \right).$
Note that we have to compute PD iterations for each state $s \in \X$; therefore, Algorithm \ref{alg:PD-RMDP} returns an $\epsilon$-optimal policy to the Distributionally Robust MDP problem in 
$O \left(N A^{2.5} S^{3.5}\log(S)\log(\epsilon^{-1})\epsilon^{-1.5} \right)$.
\end{proof}
Comparing Algorithm \ref{alg:PD-RMDP} to Value Iteration, we improve upon the dependence on the problem size by a factor of $O(N^{2.5}AS)$, at the cost of a $\epsilon^{-1.5}$ convergence rate in terms of the accuracy $\epsilon$. The improvement in terms of $N$ is better than in terms of $S$ and $A$ because the number of kernels $N$ only plays a role for the max-player; this is also the reason why we choose different step sizes $\tau$ and $\sigma$ in Algorithm \ref{alg:PD-RMDP}. 
\begin{remark}[Epoch and weight scheme]
The above results are for epoch lengths $T_\ell = \ell^2$. By choosing larger values $T_{\ell} = \ell^{q}$ where $q$ tends to infinity, our algorithm approaches a complexity of $O \left(NA^{2}S^{3}\log(S)\log^{m}(\epsilon^{-1})\epsilon^{-1} \right)$. Thus it is possible to improve upon VI by a total factor of $O(N^{2.5}A^{1.5}S^{1.5})$ by choosing a large $q$. 
Additionally, we have presented Algorithm \ref{alg:PD-RMDP} with \textit{linear} weights, i.e. the weight is $t$ for the iterate $(\bm{x}^{t},(\bm{y}_{i}^{t})_{i})$. Note that Algorithm \ref{alg:PD-RMDP} can be implemented with any (increasing) weight schemes; we found that for a weight scheme of $t^{p},  p \geq 0$, the convergence rate of Algorithm \ref{alg:PD-RMDP} \textit{does not depend of $p$}, even though numerically,  $p=1$ performs better than $p=0$.
\end{remark}

\section{Numerical Experiments}
\begin{figure*}[htp]
  \begin{subfigure}{0.32\textwidth}
\centering
         \includegraphics[width=1.0\linewidth]{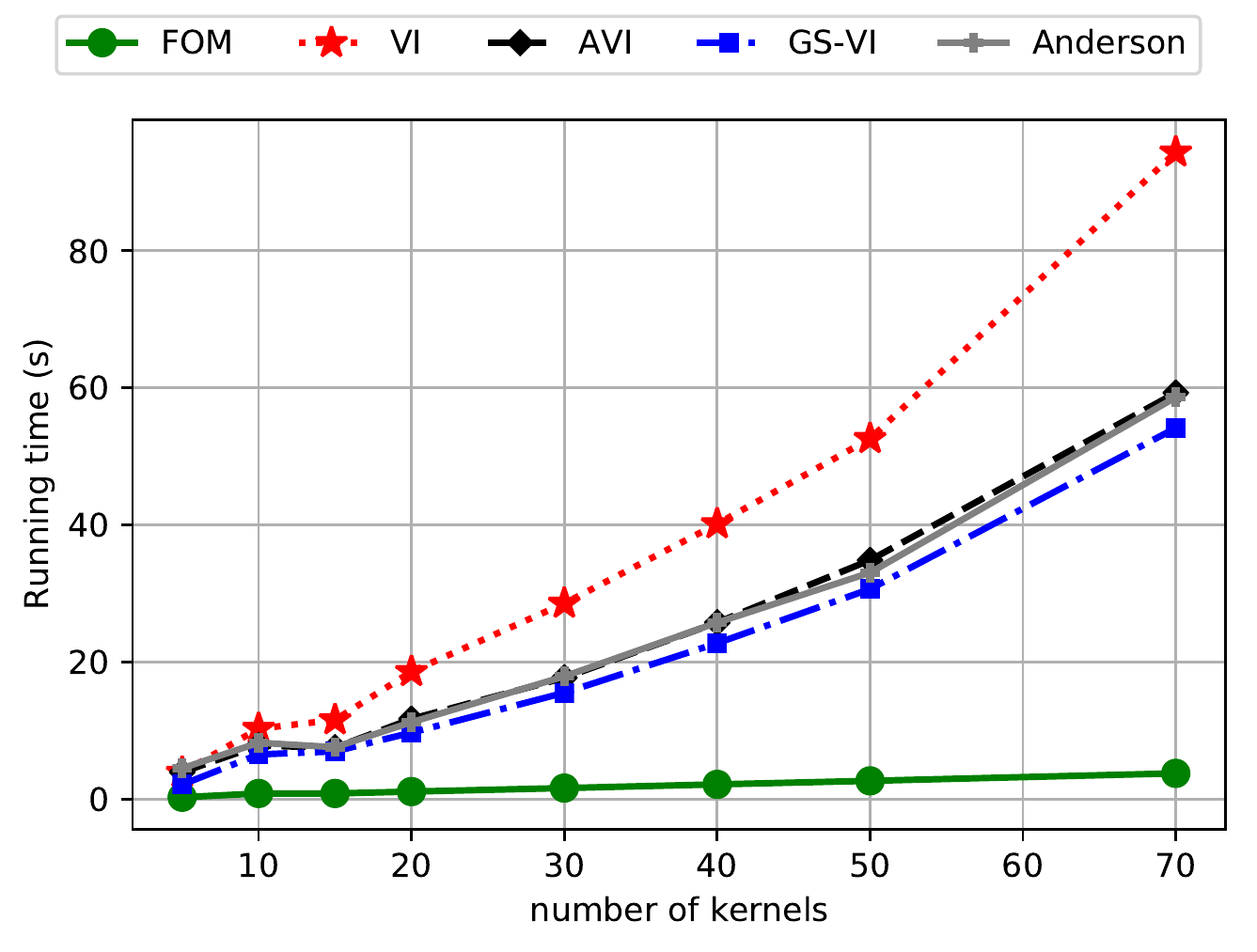}
         \caption{Forest.}
                    \label{fig:kernel_forest}
  \end{subfigure}
  \begin{subfigure}{0.32\textwidth}
\centering
         \includegraphics[width=1.0\linewidth]{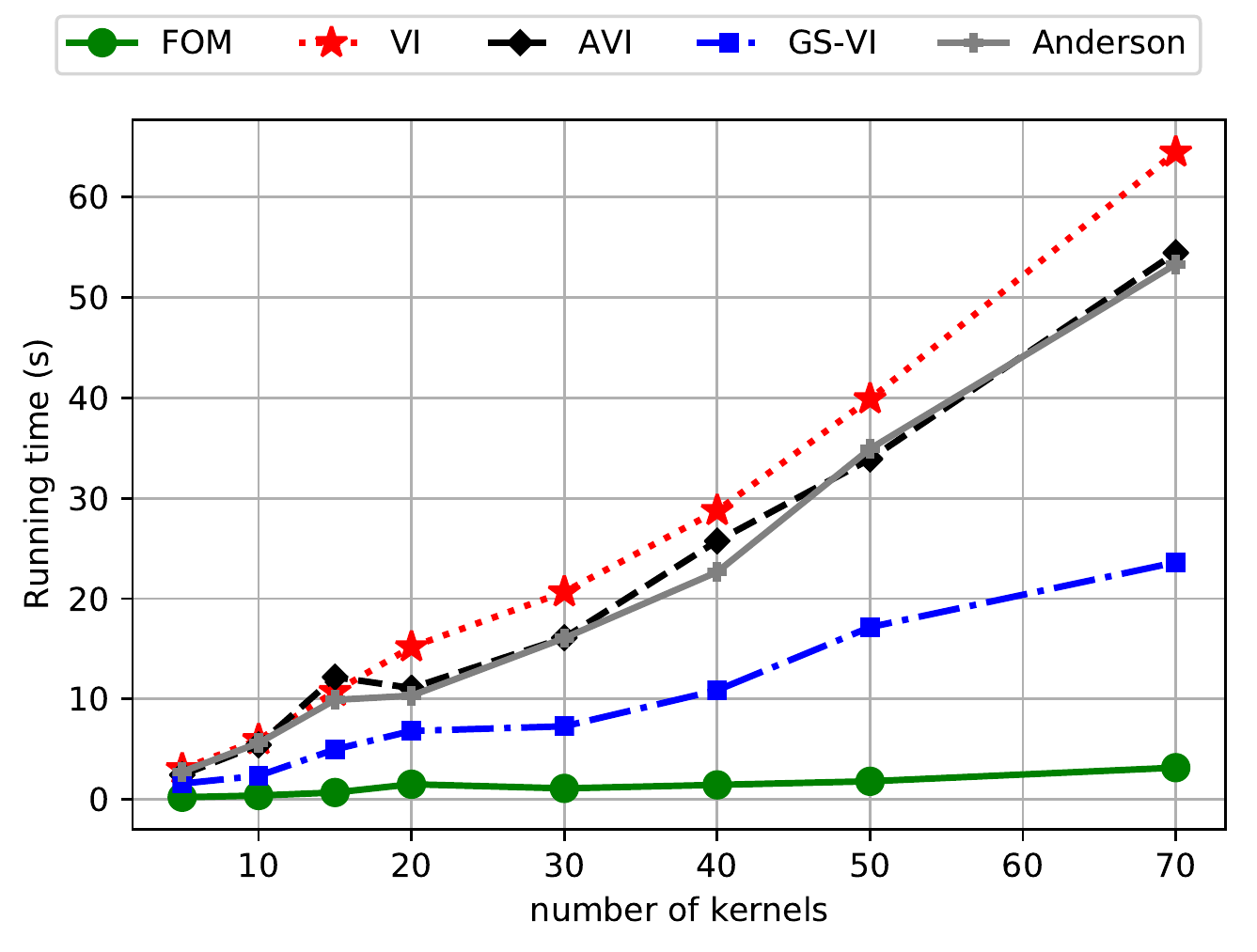}
         \caption{Machine.} 
                 \label{fig:kernel_machine}
  \end{subfigure}
\begin{subfigure}{0.32\textwidth}
\centering
         \includegraphics[width=1.0\linewidth]{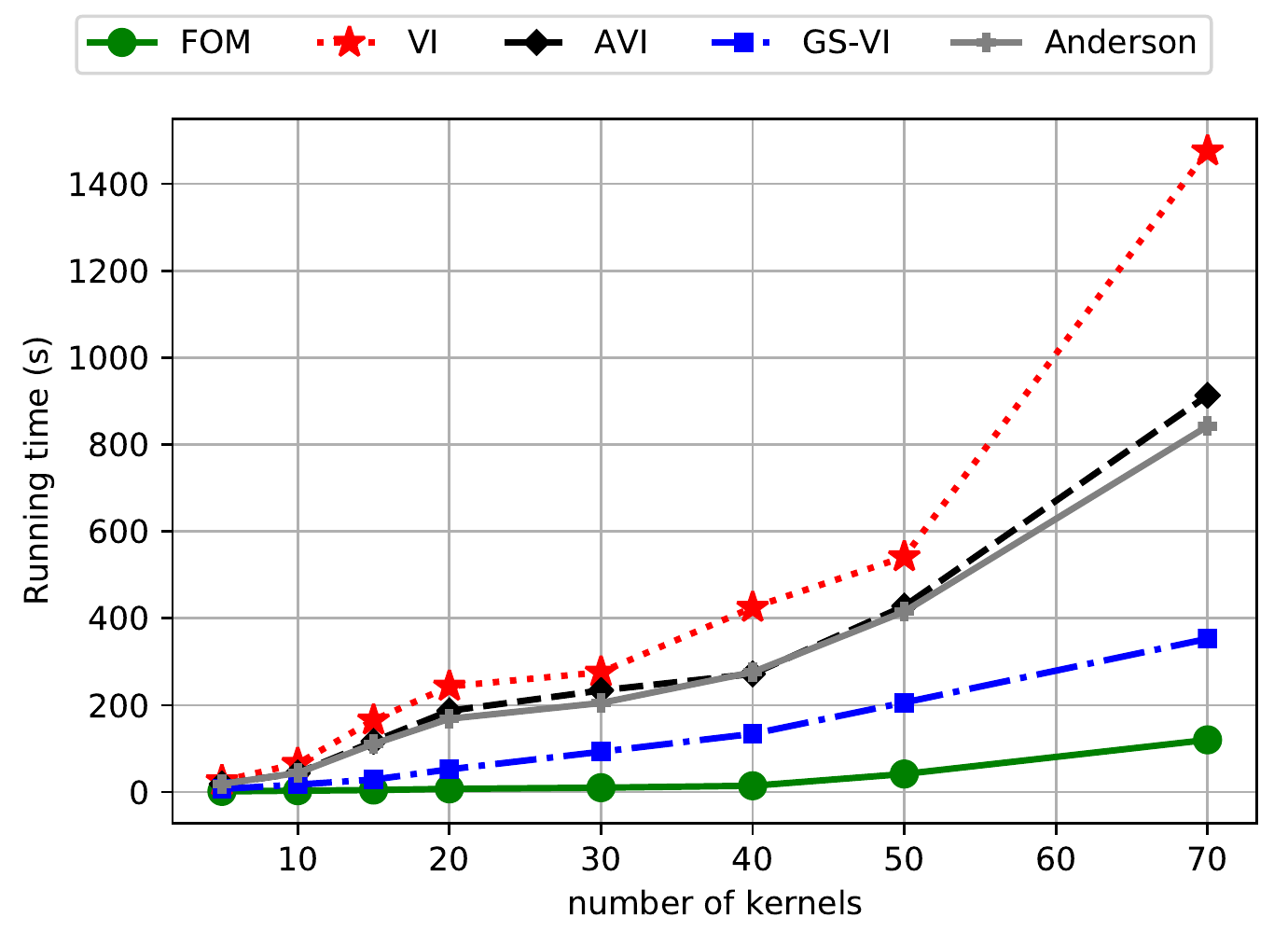}
         \caption{Garnet.}
           \label{fig:kernel_garnet}
  \end{subfigure}
  \caption{Comparison of Alg. \ref{alg:PD-RMDP} with four variants of Value Iteration on three MDP domains (increasing number of kernels).}
  \label{fig:kernel}
\end{figure*}

\begin{figure*}[htp]
  \begin{subfigure}{0.32\textwidth}
\centering
         \includegraphics[width=1.0\linewidth]{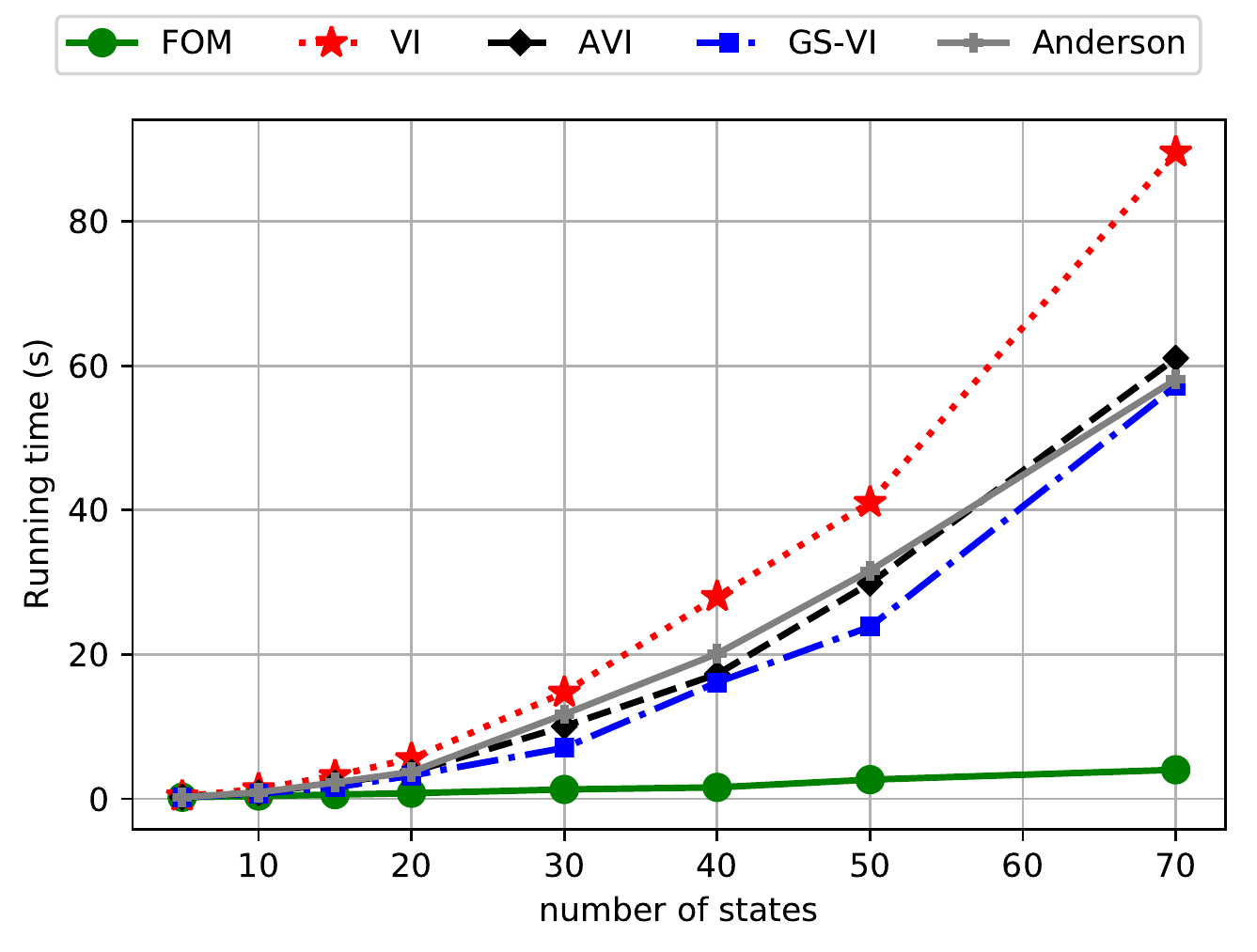}
         \caption{Forest.}
                    \label{fig:state_forest}
  \end{subfigure}
  \begin{subfigure}{0.32\textwidth}
\centering
         \includegraphics[width=1.0\linewidth]{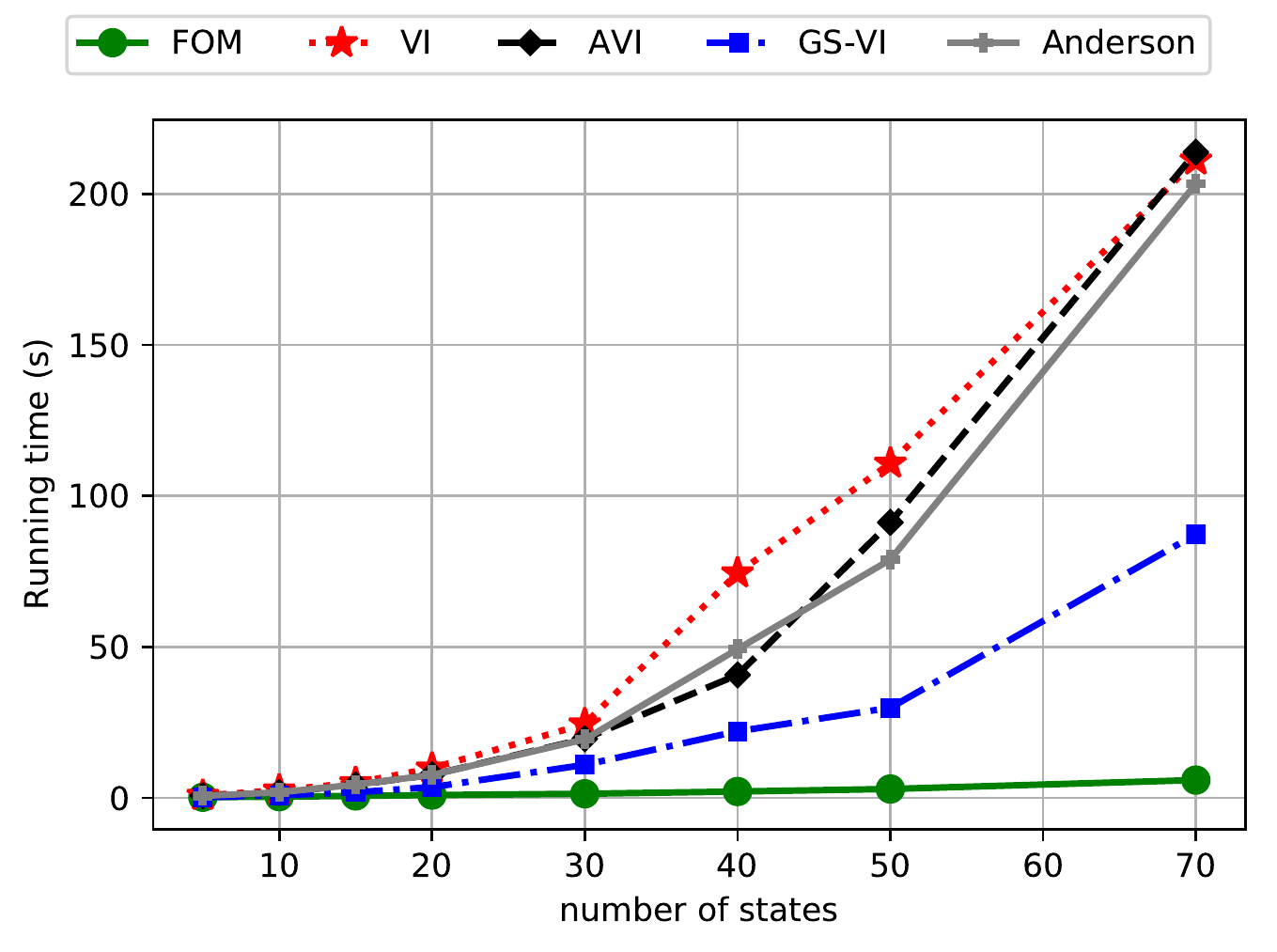}
         \caption{Machine.} 
                 \label{fig:state_machine}
  \end{subfigure}
\begin{subfigure}{0.32\textwidth}
\centering
         \includegraphics[width=1.0\linewidth]{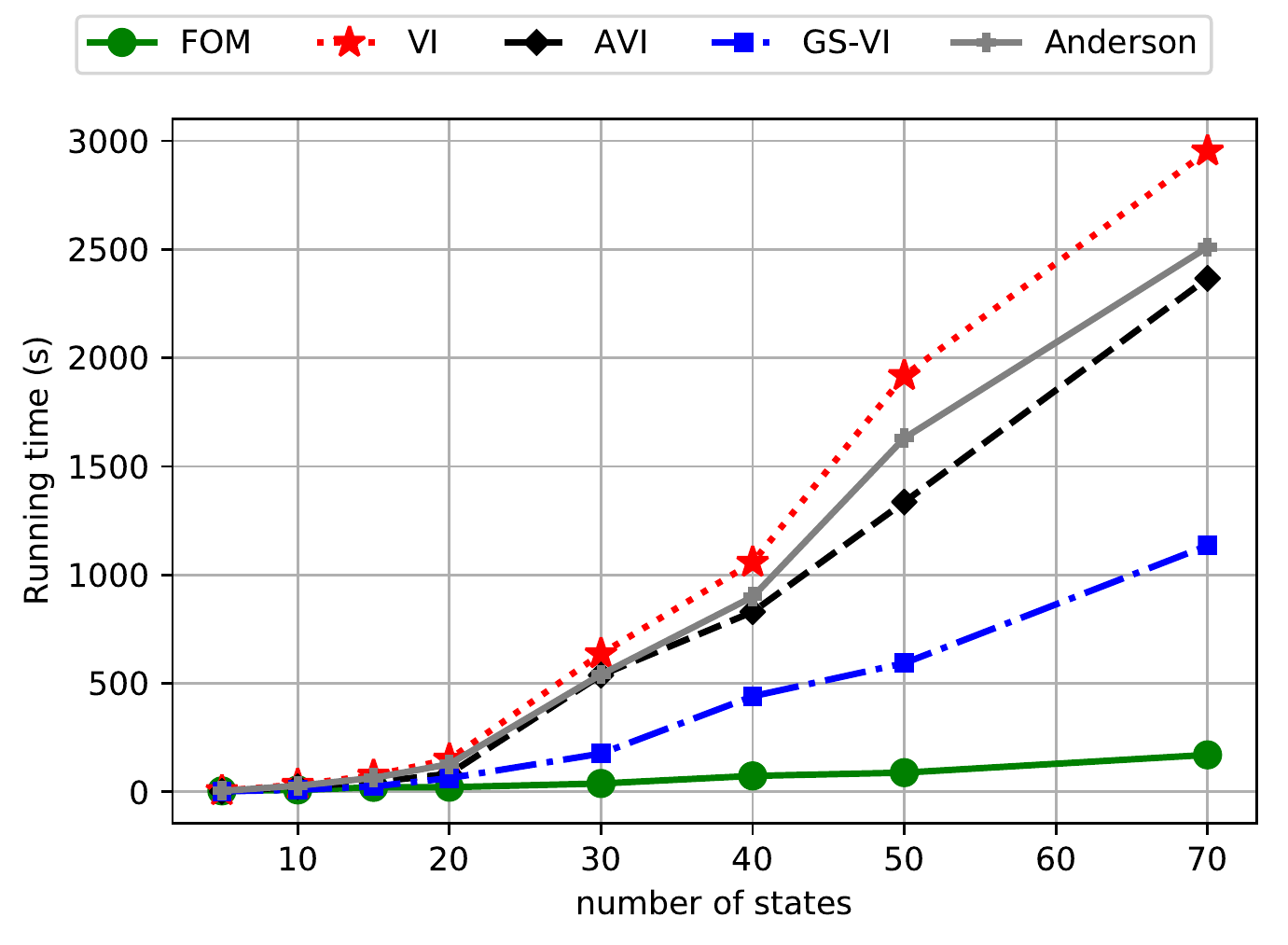}
         \caption{Garnet.}
           \label{fig:state_garnet}
  \end{subfigure}
  \caption{Comparison of Alg. \ref{alg:PD-RMDP} with four variants of Value Iteration on three MDP domains (increasing number of states).}
 \label{fig:state}
\end{figure*}
In this section we compare the empirical performances of our algorithm with state-of-the-art approaches.
We focus on $d=d_{2}$ and we compare the running time of Algorithm \ref{alg:PD-RMDP} to the classical Value Iteration algorithm \ref{alg:VI}, Gauss-Seidel VI (\textit{GS-VI}, \citet{Puterman}), Anderson VI (\textit{Anderson}, \citet{ref-c}), and Accelerated VI (\textit{AVI}, \citet{GGC-AVI}) (see Appendix \ref{app:simu} for more details).

\noindent\textit{Empirical setup.}
We implement our algorithms in Python 3.7.3, using Gurobi 8.1.1 to solve any linear/quadratic optimization program involved. We run our simulations on a laptop with 2.2 GHz Intel Core i7 and 8 GB of RAM.  We test our algorithm on three different sets of instances: a machine replacement problem, a forest management problem and some random (Garnet) instances.  
The discount factor is fixed at $\lambda=0.8$. 
 For each MDP instance, we generate the sampled kernels $\hat{\bm{y}}_{1}, ..., \hat{\bm{y}}_{N}$ by considering $N$ small random (Garnet) perturbations around the ``true'' nominal kernel $\bm{y}^{0}$ (see Appendix \ref{app:simu}).

All figures in this section show the running times of the algorithms before returning an $\epsilon$-optimal policy with $\epsilon=0.1$. We stop Algorithm \ref{alg:PD-RMDP} when \eqref{eq:DG-RMDP} $\leq \epsilon/2$, where
\begin{equation}\label{eq:DG-RMDP}\tag{DG}
 \max_{\mu \in \D} C(\bm{x},\mu) - \min_{\bm{x'} \in \Pi} C(\bm{x'},\mu)
\end{equation}
is the \textit{duality gap} of a pair of policy-density $(\pi,\mu)$.  
Note that \eqref{eq:DG-RMDP} $\leq \epsilon/2 $ is enough to ensure that the policy is an $\epsilon$-optimal policy for \eqref{eq:DR-MDP}. We stop VI and variants when $\| \bm{v}^{\ell} - F(\bm{v}^{\ell}) \|_{\infty} < 2\lambda \epsilon (1-\lambda)^{-1}$, which guarantees that the current policy is $\epsilon$-optimal \cite{Puterman}.
The running times are averaged across 5 instances by changing the seeds for sampling the $N$ kernels around $\bm{y}^{0}$.
 
 \noindent\textit{Initialization and warm-start.}
We initialize all algorithms with $\bm{v}_{0}=\bm{0}$. We evaluate $F(\bm{v})$ using our convex reformulation (see Appendix \ref{app:Belman-update}).
At epoch $\ell$ of \ref{alg:VI} and variants, we warm-start each computation of $F(\bm{v}^{\ell})$ with the optimal solution obtained from the previous epoch $\ell-1$. We present details about the computation of \eqref{eq:DG-RMDP} in Appendix \ref{app:DG}.

\noindent\textit{Structured MDP instances.}
We consider two instances inspired from real-world applications,  a machine replacement problem studied by \citet{Delage}, \citet{Kuhn} and \citet{GGC}, and a forest management example from the Python package \textit{pymdptoolbox} \cite{pymdp} inspired by \citet{possingham1997application}.  In the machine replacement problem, the goal is to design a replacement policy for a line of machines.  The states of the MDP represent age phases of the machine and the actions represent different repair or replacement options.  In the forest management problem,  the forest grows at every period and the goal is to balance the revenue associated with selling cut wood and the risk of wildfire. 
The transition kernels $\bm{\hat{y}}_{1}, ..., \bm{\hat{y}}_{N}$ represent historical data,  obtained from observations from previous years.
In both instances, even though the transition parameters can be estimated from retrospective data sets, one often does not have access to enough data to exactly assess the probability of a machine breaking down when in a given condition, the rate of growth of the forest, or the risk of wildfire.  Additionally, the historical data may contain errors; this warrants the use of a robust model for finding good, stable machine replacement and forest management policies. We present details on these instances in Appendix \ref{app:machine} and Appendix \ref{app:forest}.
 
\noindent\textit{Random MDP instances.}
We also test our algorithm on random, denser MDP instances. We use the Generalized Average Reward Non-stationary Environment Test-bench, or in short, Garnet MDPs \cite{garnet,bhatnagar2007naturalgradient}. Garnet MDPs are a class of abstract but representative finite MDPs that are easy to build and for which we can control the connectivity of the underlying Markov chain with a \textit{branching} factor, $n_{b}$, which represents the proportion of next states available at every state-action pair $(s,a)$. They are a class of randomly constructed finite MDP’s serving as a test-bench for
RL algorithms \cite{garnet-1,garnet-2,garnet-3}.  
We consider $S=A$, $n_{b}=20 \%$ and random uniform rewards in $[0,10]$.

\noindent\textit{Increasing instance sizes.}
Our experiments evaluate the performance of all algorithms by running them on increasingly-larger instances. Our problems have three size parameters: $S$ and $A$, which affect the MDP size, and $N$, which affects the size of the ambiguity sets. Because the runtimes of the VI algorithms grow quickly in these parameters, we perform our experiments by holding two out of three parameters fixed, while increasing the last one. When we consider an increasing number of kernels (Figure \ref{fig:kernel}), we keep $S = 30$ fixed. When we consider an increasing number of states, we keep $N =30$ fixed. For all instances, $A=30$ for Garnet MDPs, $A=2$ for machine replacement MDPs, and $A=3$ for forest management MDPs.


\noindent\textit{Numerical results.}
We present the results of our numerical study in Figure \ref{fig:kernel} and Figure \ref{fig:state}.  For very small instances (e.g.  $S=5$ states,  $A=2$ actions, $N=30$ observed kernels),  Algorithm \ref{alg:PD-RMDP} has similar performance as the other four algorithms.  When the number of states or the number of kernels increases,  the average convergence times of our algorithm moderately increase,  e.g.  from  $1.6$ seconds for $N=5, S,A=30$ to $120.2$ seconds for $N=70,S,A=30$ (Figure \ref{fig:kernel_garnet}).  However, Algorithm \ref{alg:PD-RMDP} scales significantly better than the other methods based on IPM, and as the instance sizes increases it outperforms all other methods.  We also see that for Garnet instances, the convergences of all the algorithms are slower, since the MDP instances are denser than for more structured examples and there are more actions.
As expected from our theoretical results in the previous section, the running time of Algorithm \ref{alg:PD-RMDP}  grows linearly with $N$.  Perhaps more surprisingly,  the empirical running times of the other algorithms also seem to grow (almost) linearly with $N$. This may be due to the solver (Gurobi 8.1.1) exploiting the particular problem structure of the robust Bellman update (see Appendix \ref{app:Belman-update}).



\bibliography{FOM_RMDP}

\begin{thebibliography}{47}
\providecommand{\natexlab}[1]{#1}
\providecommand{\url}[1]{\texttt{#1}}
\expandafter\ifx\csname urlstyle\endcsname\relax
  \providecommand{\doi}[1]{doi: #1}\else
  \providecommand{\doi}{doi: \begingroup \urlstyle{rm}\Url}\fi

\bibitem[Akian et~al.(2020)Akian, Gaubert, Qu, and Saadi]{akian2020multiply}
Akian, M., Gaubert, S., Qu, Z., and Saadi, O.
\newblock Multiply accelerated value iteration for non-symmetric affine fixed
  point problems and application to {M}arkov decision processes.
\newblock \emph{arXiv preprint arXiv:2009.10427}, 2020.

\bibitem[Archibald et~al.(1995)Archibald, McKinnon, and Thomas]{garnet}
Archibald, T., McKinnon, K., and Thomas, L.
\newblock On the generation of {M}arkov decision processes.
\newblock \emph{Journal of the Operational Research Society}, 46\penalty0
  (3):\penalty0 354--361, 1995.

\bibitem[Ben-Tal \& Nemirovski(2001)Ben-Tal and Nemirovski]{BenTal-Nemirovski}
Ben-Tal, A. and Nemirovski, A.
\newblock \emph{Lectures on modern convex optimization: analysis, algorithms,
  and engineering applications}, volume~2.
\newblock Siam, 2001.

\bibitem[Bertsimas et~al.(2018)Bertsimas, Shtern, and Sturt]{bertsimas2018data}
Bertsimas, D., Shtern, S., and Sturt, B.
\newblock A data-driven approach for multi-stage linear optimization.
\newblock \emph{Available at Optimization Online}, 2018.

\bibitem[Bertsimas et~al.(2019)Bertsimas, Shtern, and Sturt]{bertsimas2019two}
Bertsimas, D., Shtern, S., and Sturt, B.
\newblock Two-stage sample robust optimization.
\newblock \emph{arXiv preprint arXiv:1907.07142}, 2019.

\bibitem[Bhatnagar et~al.(2007)Bhatnagar, Sutton, Ghavamzadeh, and
  Lee]{bhatnagar2007naturalgradient}
Bhatnagar, S., Sutton, R.~S., Ghavamzadeh, M., and Lee, M.
\newblock Natural gradient actor-critic algorithms.
\newblock \emph{Automatica}, 2007.

\bibitem[Chambolle \& Pock(2016)Chambolle and Pock]{ChambollePock16}
Chambolle, A. and Pock, T.
\newblock On the ergodic convergence rates of a first-order primal--dual
  algorithm.
\newblock \emph{Mathematical Programming}, 159\penalty0 (1-2):\penalty0
  253--287, 2016.

\bibitem[Chen et~al.(2019)Chen, Yu, and Haskell]{chen2019distributionally}
Chen, Z., Yu, P., and Haskell, W.~B.
\newblock Distributionally robust optimization for sequential decision-making.
\newblock \emph{Optimization}, 68\penalty0 (12):\penalty0 2397--2426, 2019.

\bibitem[Cordwell et~al.(2015)Cordwell, Gonzalez, and Tulabandhula]{pymdp}
Cordwell, S., Gonzalez, Y., and Tulabandhula, T.
\newblock {M}arkov {D}ecision {P}rocess ({MDP}) toolbox for python.
\newblock \emph{https://github.com/sawcordwell/pymdptoolbox}, 2015.

\bibitem[De~Farias \& Van~Roy(2003)De~Farias and Van~Roy]{de2003linear}
De~Farias, D.~P. and Van~Roy, B.
\newblock The linear programming approach to approximate dynamic programming.
\newblock \emph{Operations research}, 51\penalty0 (6):\penalty0 850--865, 2003.

\bibitem[Delage \& Mannor(2010)Delage and Mannor]{Delage}
Delage, E. and Mannor, S.
\newblock Percentile optimization for {M}arkov decision processes with
  parameter uncertainty.
\newblock \emph{Operations Research}, 58\penalty0 (1):\penalty0 203 -- 213,
  2010.

\bibitem[Esfahani \& Kuhn(2018)Esfahani and Kuhn]{esfahani2018data}
Esfahani, P.~M. and Kuhn, D.
\newblock Data-driven distributionally robust optimization using the
  {W}asserstein metric: Performance guarantees and tractable reformulations.
\newblock \emph{Mathematical Programming}, 171\penalty0 (1-2):\penalty0
  115--166, 2018.

\bibitem[Gao \& Kleywegt(2016)Gao and Kleywegt]{gao2016distributionally}
Gao, R. and Kleywegt, A.~J.
\newblock Distributionally robust stochastic optimization with {W}asserstein
  distance.
\newblock \emph{arXiv preprint arXiv:1604.02199}, 2016.

\bibitem[Gao et~al.(2019)Gao, Kroer, and Goldfarb]{GKG20}
Gao, Y., Kroer, C., and Goldfarb, D.
\newblock Increasing iterate averaging for solving saddle-point problems.
\newblock \emph{arXiv preprint arXiv:1903.10646}, 2019.

\bibitem[Geist \& Scherrer(2018)Geist and Scherrer]{ref-c}
Geist, M. and Scherrer, B.
\newblock Anderson acceleration for reinforcement learning.
\newblock \emph{arXiv preprint arXiv:1809.09501}, 2018.

\bibitem[Givens et~al.(1984)Givens, Shortt, et~al.]{givens1984class}
Givens, C.~R., Shortt, R.~M., et~al.
\newblock A class of {W}asserstein metrics for probability distributions.
\newblock \emph{The Michigan Mathematical Journal}, 31\penalty0 (2):\penalty0
  231--240, 1984.

\bibitem[Gong \& Wang(2020)Gong and Wang]{gong2020duality}
Gong, H. and Wang, M.
\newblock A duality approach for regret minimization in average-award ergodic
  {M}arkov decision processes.
\newblock 2020.

\bibitem[Goyal \& Grand-Cl{\'e}ment(2018)Goyal and Grand-Cl{\'e}ment]{GGC}
Goyal, V. and Grand-Cl{\'e}ment, J.
\newblock Robust {M}arkov decision process: Beyond rectangularity.
\newblock \emph{arXiv preprint arXiv:1811.00215}, 2018.

\bibitem[Goyal \& Grand-Cl{\'e}ment(2019)Goyal and Grand-Cl{\'e}ment]{GGC-AVI}
Goyal, V. and Grand-Cl{\'e}ment, J.
\newblock A first-order approach to accelerated value iteration.
\newblock \emph{arXiv preprint arXiv:1905.09963}, 2019.

\bibitem[Grand-Cl{\'e}ment \& Kroer(2021)Grand-Cl{\'e}ment and
  Kroer]{grand2020scalable}
Grand-Cl{\'e}ment, J. and Kroer, C.
\newblock Scalable first-order methods for robust {MDP}s.
\newblock In \emph{The Thirty-Fifth {AAAI} Conference on Artificial
  Intelligence}, 2021.

\bibitem[Grand-Cl{\'e}ment et~al.(2020)Grand-Cl{\'e}ment, Chan, Goyal, and
  Escobar]{grand2020robust}
Grand-Cl{\'e}ment, J., Chan, C.~W., Goyal, V., and Escobar, G.
\newblock Robust policies for proactive {ICU} transfers.
\newblock \emph{arXiv preprint arXiv:2002.06247}, 2020.

\bibitem[Ho et~al.(2018)Ho, Petrik, and Wiesemann]{Ho}
Ho, C., Petrik, M., and Wiesemann, W.
\newblock Fast {B}ellman updates for {R}obust{ MDP}s.
\newblock \emph{Proceedings of the 35th International Conference on Machine
  Learning (ICML), Stockholm}, 2018.

\bibitem[Iyengar(2005)]{Iyengar}
Iyengar, G.
\newblock Robust dynamic programming.
\newblock \emph{Mathematics of Operations Research}, 30\penalty0 (2):\penalty0
  257--280, 2005.

\bibitem[Jian et~al.(2019)Jian, Fruit, Pirotta, and Lazaric]{garnet-3}
Jian, Q., Fruit, R., Pirotta, M., and Lazaric, A.
\newblock Exploration bonus for regret minimization in discrete and continuous
  average reward {MDP}s.
\newblock In \emph{Advances in Neural Information Processing Systems}, pp.\
  4890--4899, 2019.

\bibitem[Jin \& Sidford(2020)Jin and Sidford]{jin2020efficiently}
Jin, Y. and Sidford, A.
\newblock Efficiently solving mdps with stochastic mirror descent.
\newblock In \emph{International Conference on Machine Learning}, pp.\
  4890--4900. PMLR, 2020.

\bibitem[Miao et~al.(2017)Miao, Han, Hendawi, Khalefa, Stankovic, and
  Pappas]{miao2017data}
Miao, F., Han, S., Hendawi, A.~M., Khalefa, M.~E., Stankovic, J.~A., and
  Pappas, G.~J.
\newblock Data-driven distributionally robust vehicle balancing using dynamic
  region partitions.
\newblock In \emph{Proceedings of the 8th International Conference on
  Cyber-Physical Systems}, pp.\  261--271, 2017.

\bibitem[Nemirovski(2004)]{nemirovski2004prox}
Nemirovski, A.
\newblock Prox-method with rate of convergence {O}(1/t) for variational
  inequalities with {L}ipschitz continuous monotone operators and smooth
  convex-concave saddle point problems.
\newblock \emph{SIAM Journal on Optimization}, 15\penalty0 (1):\penalty0
  229--251, 2004.

\bibitem[Nesterov(1983)]{nesterov-1983}
Nesterov, Y.
\newblock A method for solving the convex programming problem with convergence
  rate {O}(1/k\^{} 2).
\newblock In \emph{Dokl. akad. nauk Sssr}, volume 269, pp.\  543--547, 1983.

\bibitem[Nesterov(2013)]{nesterov-book}
Nesterov, Y.
\newblock \emph{Introductory lectures on convex optimization: A basic course},
  volume~87.
\newblock Springer Science \& Business Media, 2013.

\bibitem[Nilim \& Ghaoui(2005)Nilim and Ghaoui]{Nilim}
Nilim, A. and Ghaoui, L.~E.
\newblock Robust control of {M}arkov decision processes with uncertain
  transition probabilities.
\newblock \emph{Operations Research}, 53\penalty0 (5):\penalty0 780--798, 2005.

\bibitem[Petrik(2010)]{petrik2010optimization}
Petrik, M.
\newblock Optimization-based approximate dynamic programming.
\newblock 2010.

\bibitem[Piot et~al.(2016)Piot, Geist, and Pietquin]{garnet-2}
Piot, B., Geist, M., and Pietquin, O.
\newblock Difference of convex functions programming applied to control with
  expert data.
\newblock \emph{arXiv preprint arXiv:1606.01128}, 2016.

\bibitem[Possingham \& Tuck(1997)Possingham and
  Tuck]{possingham1997application}
Possingham, H. and Tuck, G.
\newblock Application of stochastic dynamic programming to optimal fire
  management of a spatially structured threatened species.
\newblock In \emph{Proceedings International Congress on Modelling and
  Simulation, MODSIM}, pp.\  813--817, 1997.

\bibitem[Puterman(1994)]{Puterman}
Puterman, M.
\newblock \emph{{M}arkov Decision Processes : Discrete Stochastic Dynamic
  Programming}.
\newblock John Wiley and Sons, 1994.

\bibitem[Steimle et~al.(2018)Steimle, Kaufman, and Denton]{steimle-2}
Steimle, L.~N., Kaufman, D.~L., and Denton, B.~T.
\newblock Multi-model {M}arkov decision processes.
\newblock \emph{Optimization Online URL http://www. optimization-online.
  org/DB\_FILE/2018/01/6434. pdf}, 2018.

\bibitem[Tamar et~al.(2014)Tamar, Mannor, and Xu]{tamar2014scaling}
Tamar, A., Mannor, S., and Xu, H.
\newblock Scaling up robust mdps using function approximation.
\newblock In \emph{International Conference on Machine Learning}, pp.\
  181--189. PMLR, 2014.

\bibitem[Tarbouriech \& Lazaric(2019)Tarbouriech and Lazaric]{garnet-1}
Tarbouriech, J. and Lazaric, A.
\newblock Active exploration in {M}arkov decision processes.
\newblock \emph{arXiv preprint arXiv:1902.11199}, 2019.

\bibitem[Tsitsiklis \& Van~Roy(1997)Tsitsiklis and
  Van~Roy]{tsitsiklis1997analysis}
Tsitsiklis, J.~N. and Van~Roy, B.
\newblock Analysis of temporal-diffference learning with function
  approximation.
\newblock In \emph{Advances in neural information processing systems}, pp.\
  1075--1081, 1997.

\bibitem[Wiesemann et~al.(2013)Wiesemann, Kuhn, and Rustem]{Kuhn}
Wiesemann, W., Kuhn, D., and Rustem, B.
\newblock Robust {M}arkov decision processes.
\newblock \emph{Operations Research}, 38\penalty0 (1):\penalty0 153--183, 2013.

\bibitem[Wiesemann et~al.(2014)Wiesemann, Kuhn, and
  Sim]{wiesemann2014distributionally}
Wiesemann, W., Kuhn, D., and Sim, M.
\newblock Distributionally robust convex optimization.
\newblock \emph{Operations Research}, 62\penalty0 (6):\penalty0 1358--1376,
  2014.

\bibitem[Xie(2020)]{xie2020tractable}
Xie, W.
\newblock Tractable reformulations of two-stage distributionally robust linear
  programs over the type-infinity {W}asserstein ball.
\newblock \emph{Operations Research Letters}, 2020.

\bibitem[Xie et~al.(2020)Xie, Zhang, and Ahmed]{xie2020distributionally}
Xie, W., Zhang, J., and Ahmed, S.
\newblock Distributionally robust bottleneck combinatorial problems:
  Uncertainty quantification and robust decision making.
\newblock \emph{arXiv preprint arXiv:2003.00630}, 2020.

\bibitem[Xu \& Mannor(2010)Xu and Mannor]{xu2010distributionally}
Xu, H. and Mannor, S.
\newblock Distributionally robust {M}arkov decision processes.
\newblock In \emph{Advances in Neural Information Processing Systems}, pp.\
  2505--2513, 2010.

\bibitem[Yang(2017)]{yang2017convex}
Yang, I.
\newblock A convex optimization approach to distributionally robust {M}arkov
  decision processes with wasserstein distance.
\newblock \emph{IEEE control systems letters}, 1\penalty0 (1):\penalty0
  164--169, 2017.

\bibitem[Yu \& Xu(2015)Yu and Xu]{yu2015distributionally}
Yu, P. and Xu, H.
\newblock Distributionally robust counterpart in {M}arkov decision processes.
\newblock \emph{IEEE Transactions on Automatic Control}, 61\penalty0
  (9):\penalty0 2538--2543, 2015.

\bibitem[Zhang et~al.(2018)Zhang, O'Donoghue, and Boyd]{ref-a}
Zhang, J., O'Donoghue, B., and Boyd, S.
\newblock Globally convergent type-{I} {A}nderson acceleration for non-smooth
  fixed-point iterations.
\newblock \emph{arXiv preprint arXiv:1808.03971}, 2018.

\bibitem[Zhao \& Guan(2018)Zhao and Guan]{zhao2018data}
Zhao, C. and Guan, Y.
\newblock Data-driven risk-averse stochastic optimization with {W}asserstein
  metric.
\newblock \emph{Operations Research Letters}, 46\penalty0 (2):\penalty0
  262--267, 2018.

\end{thebibliography}
\bibliographystyle{icml2021}
\appendix
\section{Convex reformulation for Bellman update}\label{app:Belman-update}
We show here how to reformulate \eqref{eq:Bellman-DR-MDP-B} into a convex program, for $\B_{s} = \B_{p,s}$ (the reformulation for $\B_{s} = \B_{\infty,s}$ follows directly).
 At every epoch of Value Iteration \ref{alg:VI}, we compute $F(\bm{v})$ for the current value vector $\bm{v} \in \R^{S}$, where
\begin{equation*}
F(\bm{v})_{s} = \min_{\bm{x}_{s} \in \Delta(A)} \max_{\bm{y}_{s} \in \B_{s}} \sum_{a=1}^{A} x_{sa} \left( c_{sa} + \lambda \cdot \bm{y}_{sa}^{\top}\bm{v} \right), \forall \; s \in \X.
\end{equation*} 
From convex duality we have, for any $s \in \X$,
\begin{align}
F(\bm{v})_{s} & = \min_{\bm{x}_{s} \in \Delta(A)} \max_{\bm{y}_{s} \in \B_{s}}  \sum_{a=1}^{A} x_{sa} \left( c_{sa} + \lambda \cdot \bm{y}_{sa}^{\top}\bm{v} \right)   \nonumber \\
& = \max_{\bm{y}_{s} \in \B_{s}}  \min_{\bm{x}_{s} \in \Delta(A)}  \sum_{a=1}^{A} x_{sa} \left( c_{sa} + \lambda \cdot \bm{y}_{sa}^{\top}\bm{v} \right). \label{eq:T_v_max_min}
\end{align}
For $\bm{y} \in \B_{s}$, we can reformulate the inner minimization as

\begin{align*}
\max \; & \gamma \\
& \gamma \in \R, \\
& c_{sa} + \lambda \bm{y}_{sa}^{\top}\bm{v} \geq \gamma, \forall \; a \in \A.
\end{align*}
Overall, we have proved that
\begin{equation}\label{eq:T_v_simu}
\begin{aligned}
F(\bm{v})_{s} = \max \; & \gamma \\
& \gamma \in \R, \bm{y} \in \B_{s},\\
& c_{sa} + \lambda \bm{y}_{sa}^{\top}\bm{v} \geq \gamma, \forall \; a \in \A.
\end{aligned}
\end{equation}
Replacing $\B_{s}$ by $\tilde{\B}_{p,s}$ we obtain
\begin{equation}\label{eq:T_v_simu_final}
\begin{aligned}
F(\bm{v})_{s} = \max \; & \gamma \\
& \gamma \in \R, \bm{y}_{1}, ..., \bm{y}_{N} \in \U ,\\
& c_{sa} + \lambda \dfrac{1}{N} \sum_{i=1}^{N} \bm{y}_{i,sa}^{\top}\bm{v} \geq \gamma \forall \; a \in \A, \\
& \dfrac{1}{N} \sum_{i=1}^{N} d(\bm{y}_{i}, \bm{\hat{y}}_{i,s})^{p} \leq \theta^{p}.
\end{aligned}
\end{equation}
Formulation \eqref{eq:T_v_simu_final} is a linear program with linear constraints (for $d=d_{1},d_{\infty}$ and $p=1$), and one additional quadratic constraint (for $d=d_{2}$ and $p=2$). Following \cite{BenTal-Nemirovski}, we can solve \eqref{eq:T_v_simu_final} up to accuracy $\epsilon$ in a number of arithmetic operations in $O\left(N^{3.5}S^{3.5}A^{3.5}\log(1/\epsilon) \right).$ We warm-start each of this optimization problem with the optimal solution found in the previous epoch of \ref{alg:VI}.
\section{Proof of Theorem \ref{th:conv-T}}\label{app:proof-main-th}
We present here the detailed proof for Theorem \ref{th:conv-T}.
We proceed in three steps:
\begin{itemize}
\item We justify the choice of the step-sizes $\sigma, \tau$ as
 \[ \tau = \left( \sqrt{A} \lambda \| \bm{v}^{\ell} \|_{2} \right)^{-1}, \sigma = N \sqrt{A} (\lambda \| \bm{v}^{\ell} \|_{2})^{-1}.\]
\item We prove upper bounds on the duality gap \eqref{eq:duality-gap-in-bellman}.
\item We finally combine these upper bounds to obtain the convergence rate of Theorem \ref{th:conv-T}.
\end{itemize}
\paragraph{Choice of step-sizes.}
We define \[L=  \sup_{ \| \bm{x} \|_{2} \leq 1, \| (\bm{y})_{i} \|_{2} \leq 1}  \sum_{a \in \A} \bm{x}_{sa}\lambda \sum_{i=1}^{N} \dfrac{1}{N} \bm{y}_{i,sa}^{\top}\bm{v}^{\ell}.\]
At epoch $\ell$ we choose step sizes $\sigma, \tau$ such that
\begin{equation}\label{eq:step-size-condition}
\dfrac{1}{ \sqrt{\sigma \tau}} = L.
\end{equation}
From \citet{ChambollePock16}, this is enough to ensure that $\bar{\bm{x}}^{\ell}_{s}, (\bar{\bm{y}}^{\ell}_{i,s})_{i}$ are $O(1/\ell^{2})$-optimal in computing $F(\bm{v}^{\ell})$, where $\bar{\bm{x}}^{\ell}_{s}, (\bar{\bm{y}}^{\ell}_{i,s})_{i}$ are the weighted averages for the iterates \[(\bm{x}_{\tau_{\ell}+1},(\bm{y}_{\tau_{\ell}+1,i})_{i}), ..., (\bm{x}_{\tau_{\ell}+\ell^{2}},(\bm{y}_{\tau_{\ell}+\ell^{2},i})_{i}),\] with weights $\tau_{\ell}+1,...,\tau_{\ell} + \ell^{2}$.
Now note that, by using Cauchy-Schwarz twice, we have
\begin{equation}\label{eq:L-sqrt-N}
L = \dfrac{\lambda }{\sqrt{N}} \| \bm{v}^{\ell} \|_{2}.
\end{equation}
Note that we could simply choose 
$\sigma = \tau = \sqrt{N} \left(  \lambda \| \bm{v}^{\ell} \|_{2} \right)^{-1}. $
However, since our convergence rate will involve the term $\Theta_{X} / \tau + \Theta_{Y}/\sigma$, we try to equalize these two terms. 
Under the condition \eqref{eq:step-size-condition}, the best choice of step sizes is therefore $ \tau = \left( \sqrt{\Theta_{X} / \Theta_{Y}} \right) L^{-1}$.  Recall that $\Theta_{X}, \Theta_{Y}$ are the maximum of the respective Bregman divergences (squared norm two) onto $\Delta(A)$ and $\tilde{B}_{p,s}$. Therefore, $\Theta_{X} = O(1), \Theta_{Y} = O( NA)$.This leads to 
\[ \tau = \left( \sqrt{A} \lambda \| \bm{v}^{\ell} \|_{2} \right)^{-1}, \sigma = N \sqrt{A} (\lambda \| \bm{v}^{\ell} \|_{2})^{-1}. \]
Note that we are essentially adjusting the step sizes, taking into account the difference of dimensions between $\Delta(A)$, the decision space of the min-player, and $\tilde{B}_{p,s} \subset \R^{N \times A \times S}$, the decision space of the max-player.
\paragraph{Upper bounds on duality gap \eqref{eq:duality-gap-in-bellman}}
Note that Theorem 3.1 in \citet{grand2020scalable} only gives an upper bound on \eqref{eq:duality-gap-in-bellman} when $N=1$, which reduces to the case of robust MDP. However, note that we can extend this result to distributionally robust MDPs by considering that Algorithm \ref{alg:PD-RMDP} is running $N$ instances of the same algorithm for robust MDPs, one instance per kernel $\bm{y}_{i}$. Here it is crucial to reckon that:
\begin{itemize}
\item This scales the constants $R_{Y}$ (maximum of $\| \cdot \|_{Y}$ on $Y$) and $\Theta_{Y}$ (maximum of $D_{Y}$ on $Y \times Y$). This is because for distributionally robust MDPs with nominal distribution supported on $N$ kernels, $Y$ is now contained in $\left( \Delta(S) \right)^{N \times A}$, compared to $Y$ contained in $\left( \Delta(S) \right)^{A}$ for robust MDPs; here recall that we denote by $Y$ the decision space of the max-player. 
\item This leaves unchanged the convergence rate of Algorithm \ref{alg:PD-RMDP} \textit{in terms of number of PD iterations} $T$, as this convergence rate only depends (in terms of transition kernels) of the \textit{expected value} $\bm{y}_{s}=\dfrac{1}{N} \sum_{i=1}^{N} \bm{y}_{i,s}$.
\end{itemize}
Therefore, after $T$ PD iterations of Algorithm \ref{alg:PD-RMDP}, the duality gap \eqref{eq:duality-gap-in-bellman} is upper bounded by 
\[O \left(  R_{X}R_{Y} \left( \dfrac{\Theta_{X}}{\tau} + \dfrac{\Theta_{Y}}{\sigma} \right) \dfrac{\sqrt{S}}{\sqrt{N}} \left( \dfrac{\lambda^{T^{1/3}}}{T^{1/3}} + \dfrac{1}{T^{2/3}} \right) \right).\]
Note the additional $1/\sqrt{N}$, compared to Theorem 3.1 from \citet{grand2020scalable}; this comes from the equality \eqref{eq:L-sqrt-N}.
Let us now simplify this upper bound.
Note that 
\[ \dfrac{\lambda^{T^{1/3}}}{T^{1/3}} + \dfrac{1}{T^{2/3}}  = O \left( \dfrac{1}{T^{2/3}} \right),\]
because of the exponential decay of the term $\lambda^{T^{1/3}}$.
Combining the two previous simplifications, we obtain that after $T$ PD iterations, the duality gap \eqref{eq:duality-gap-in-bellman} is upper bounded by
\[ O \left(  R_{X}R_{Y} \left( \dfrac{\Theta_{X}}{\tau} + \dfrac{\Theta_{Y}}{\sigma} \right) \dfrac{\sqrt{S}}{\sqrt{N}}\dfrac{1}{T^{2/3}} \right). \]
%

\section{Proof Proposition \ref{prop:finite-type}}\label{app:computing-FOM}
In this section we focus on solving \eqref{eq:prox-update-min-player-p}, dropping the index $s \in \X$, with the understanding that $\bm{h} = \bm{h}_{s} \in \R^{A \times S}, \bm{\hat{y}}_{i} = \bm{\hat{y}}_{i,s} \in \U$.
\paragraph{Proof for $d=d_{2}, p=2$.}
The proximal update becomes
\begin{align*}
\min & \sum_{i=1}^{N} \langle \bm{y}_{i} , \bm{h} \rangle + \dfrac{1}{2 \sigma} \|\bm{y}_{i} - \bm{y'}_{i} \|^{2}_{2} \\
& \bm{y}_{1}, ..., \bm{y}_{N} \in  \left( \Delta(S) \right)^{A}, \\
&  \dfrac{1}{N} \sum_{i=1}^{N} \| \bm{y}_{i}- \bm{\hat{y}}_{i}\|_{2}^{2} \leq \theta^{2}.
\end{align*}
If we dualize the second constraint with a Lagrange multiplier $\gamma$, we end up with computing $N A$ Euclidean projections onto the simplex $\Delta(S)$, because the argmin of
\[ \bm{y} \in \U \mapsto \langle \bm{y}, \bm{h} \rangle + \dfrac{1}{2 \sigma} \| \bm{y} - \bm{y}' \|_{2}^{2} + \dfrac{\gamma}{2} \| \bm{y} - \bm{\hat{y}}_{i} \|_{2}^{2} \]
is the same as the argmin of
\[ \bm{y} \in \U \mapsto \dfrac{1}{2} \| \bm{y} - \dfrac{\sigma}{1+\sigma \gamma} \left( \dfrac{1}{\sigma} \bm{y'} + \gamma \bm{\hat{y}}_{i} - \bm{h}\right) \|_{2}^{2}.\]
We therefore compute $NA$ 
Euclidean projections onto the simplex of size $S$, which can be performed in $O \left( NA S \log(S) \right)$ arithmetic operations. We then need to binary search over the Lagrange multiplier $\gamma$, resulting in a complexity $O \left(NA S \log(S) \log(\epsilon^{-1} \right)$.

\paragraph{Proof for $d=d_{1}, p=1$.}
The proximal update becomes
\begin{align*}
\min & \sum_{i=1}^{N} \langle \bm{y}_{i} , \bm{h} \rangle + \dfrac{1}{2 \sigma} \|\bm{y}_{i} - \bm{y'}_{i} \|^{2}_{2} \\
& \bm{y}_{1}, ..., \bm{y}_{N} \in  \left( \Delta(S) \right)^{A}, \\
&  \dfrac{1}{N} \sum_{i=1}^{N} \| \bm{y}_{i}- \bm{\hat{y}}_{i}\|_{1} \leq \theta.
\end{align*}
We introduce a Lagrange multiplier $\gamma \geq 0$ for the second constraint: we now solve
\begin{align*}
\max_{\gamma \geq 0} - \gamma \theta \\
+ \min & \sum_{i=1}^{N} \langle \bm{y}_{i} , \bm{h} \rangle + \dfrac{1}{2 \sigma} \|\bm{y}_{i} - \bm{y'}_{i} \|^{2}_{2} + \gamma \| \bm{y}_{i}- \bm{\hat{y}}_{i}\|_{1}  \\
& \bm{y}_{1}, ..., \bm{y}_{N} \in  \left( \Delta(S) \right)^{A}.
\end{align*}
We then introduce Lagrange multipliers $\left( \alpha_{i,a} \right)_{i,a}$ for each constraint $\sum_{s'=1}^{S} y_{i,a,s'} =1$ for each $i=1, ..., N$ and $a \in \A$:
\begin{align*}
& \max_{\gamma \geq 0} \max_{(\alpha_{i,a})_{i,a} \in \R^{N \times A}} - \sum_{i,a} \alpha_{i,a} - \gamma \theta \\
& + \sum_{i=1}^{N} \sum_{a=1}^{A} \sum_{s'=1}^{S} \min_{y_{i,a,s'} \geq 0}  (h_{i,a,s'} + \alpha_{i,a}) y_{i,a,s'} \\
& + \dfrac{1}{2 \sigma} (y_{i,a,s'} - y_{i,a,s'}' )^{2} + \gamma | y_{i,a,s'} - \hat{y}_{i,a,s'} |.
\end{align*}
\paragraph{Solving the inner minimization.}
Let us drop the index $(i,a,s')$ and explain how to compute a closed-form solution to the inner univariate minimization:
\[ \min_{y \geq 0}  \; (h+ \alpha) y + \dfrac{1}{2 \sigma} (y - y' )^{2} + \gamma | y - \hat{y} |. \]
We can distinguish three regions.
\begin{enumerate}
\item $y >\hat{y}$. The first-order conditions yield
\[ (h+\alpha) + (1/\sigma)  (y - y') + \gamma =0, \]
which implies $y =  y'- \sigma(\gamma+ h+\alpha)$. This is valid as long as $y'- \sigma(\gamma+ h+\alpha) > \hat{y}$. Note that $y'- \sigma( \gamma + h +\alpha) > \hat{y}$ implies $y'- \sigma( \gamma + h +\alpha) \geq 0$, since $\hat{y} \geq 0$.
\item $y < \hat{y}$. The first-order conditions yield $y=y'-\sigma(-\gamma + h + \alpha)$, which is valid as long as 
$y' - \sigma(-\gamma+ h+  \alpha) < \hat{y}$ and $y' - \sigma(-\gamma + h + \alpha) \geq 0$.
\end{enumerate}
Overall, we have
\begin{equation}\label{eq:y-shrinkage}
y = \begin{cases} y'- \sigma(\gamma + h + \alpha) &\mbox{if } \dfrac{1}{\sigma} (y'-\hat{y}) - h -\alpha > \gamma, \\
\hat{y} & \mbox{if } | \dfrac{1}{\sigma} (y'-\hat{y}) - h -\alpha  | \leq \gamma, \\
(y'-\sigma(-\gamma + h + \alpha))^{+} &\mbox{if } \dfrac{1}{\sigma} (y'-\hat{y}) - h -\alpha < -\gamma. \end{cases}
\end{equation}
 Note that this is essentially the shrinkage-thresholding operator, up to the last case and the $x \mapsto x^{+}$ function (which stems from the non-negativity constraint).
 
\paragraph{Solving the maximization over $\alpha$.} For a fixed Lagrange multiplier $\gamma$, our goal is now to solve 
\begin{equation}\label{eq:max_alpha}
\max_{\alpha \in \R} - \alpha + \sum_{s'=1}^{S} (h_{s'}+ \alpha)y_{s'} + \dfrac{1}{2\sigma} (y_{s'} - y'_{s'}) + \gamma | y_{s'} - \hat{y}_{s'} |,
\end{equation}
where $y$ follows \eqref{eq:y-shrinkage}.
Let us rewrite  \eqref{eq:y-shrinkage} with the index $s'$ and split the thresholding at zero into two cases:
\begin{equation*}
y = \begin{cases} y'- \sigma(\gamma + h + \alpha) &\mbox{if } (1/\sigma) (y'-\hat{y}) - h -\alpha > \gamma, \\
\hat{y} & \mbox{if } | (1/\sigma) (y'-\hat{y}) - h -\alpha  | \leq \gamma, \\
(y'+\gamma - h - \alpha)^{+} &\mbox{if } (1/\sigma) (y'-\hat{y}) - h -\alpha < -\gamma,\\
0 &\mbox{if }(1/\sigma) y'_{s'} - h_{s'} -\alpha < -\gamma. \end{cases}
\end{equation*}
For each $s' \in \X$ there are three breakpoints where the behavior of $y_{s'}$ changes with respect to the choice of $\alpha$:
\begin{enumerate}
\item $(1/\sigma) y'_{s'} - h_{s'} -\alpha = -\gamma$: $y_{s'}$ becomes nonzero at a rate of $-\sigma \alpha$,
\item $(1/\sigma) (y'-\hat{y}) - h -\alpha =-\gamma$: $y_{s'}$ becomes constant at $\hat y_{s'}$,
\item $y (1/\sigma) (y'-\hat{y}) - h -\alpha = \gamma$: $y_{s'}$ grows above $\hat y_{s'}$ at a rate $- \sigma \alpha$.
\end{enumerate}
This yields the following algorithm.
\begin{enumerate}
\item We sort the breakpoints in decreasing order of $\alpha$, which takes time $O(S \log(S))$.
\item At the first breakpoint, $y_{s'} = 0$ for all $s'$.
\item We keep a counter \texttt{num\_active} denoting how many variables change with $\alpha$ at the current breakpoint, initialized at zero.
\item We keep a counter \texttt{sum} denoting the value of $\sum_{s'}y_{s'}$ if we had set $\alpha$ equal to the current breakpoint, initialized at zero.
\item We then iterate through the breakpoints (in decreasing order). Let $\alpha_1,\alpha_2$ be the previous and current breakpoints. At every breakpoint:
  \begin{enumerate}
  \item set $\texttt{sum} += \sigma \texttt{num\_active}\cdot (\alpha_2 - \alpha_1)$.
  \item if $\texttt{sum} > 1$ then stop and go to \ref{bp search set alpha}.
  \item else, we update \texttt{num\_active} based on whether the current variable starts or stops changing at $\alpha_2$, and go to the next breakpoint.
  \end{enumerate}
  \item From the mean value theorem, an optimal $\alpha^{*}$ belongs to the interval $[\alpha_1,\alpha_2]$.
    We find it by setting $\alpha = \alpha_2 - (\texttt{sum} - 1) / (\sigma \texttt{num\_active})$.
    \label{bp search set alpha}
\end{enumerate}
There are $NA$ Lagrange multipliers $(\alpha_{ia})_{i,a}$, and we can compute each of them in $O(S\log(S))$, given a Lagrange multiplier $\gamma$. We still need to use bisection to compute $\gamma^{*}$. Overall we end up  with a complexity of $O(NA^{2}S^{3} \log(\epsilon^{-1})\epsilon^{-1}).$
\begin{remark} In the context of robust MDP (i.e. N=1), note that \citet{Ho} gives an algorithm with complexity $O(S^{2}A\log(S^{2}A))$ to compute \eqref{eq:Bellman-DR-MDP-B} with $d=d_{1},p=1$. It remains unclear to us if this algorithm extends to the case $N \geq 2$ and its complexity in this case.
\end{remark}

\paragraph{Proof for $d=d_{\infty},p=1$.}
The FOM update becomes
\begin{align*}
\min & \;  \dfrac{1}{N} \sum_{i=1}^{N} \langle \bm{y}_{i} , \bm{h} \rangle + \dfrac{1}{2\sigma} \| \bm{y}_{i} - \bm{y'}_{i} \|_{2}^{2} \\
& \bm{y}_{1}, ..., \bm{y}_{N} \in \U, \\
&  \dfrac{1}{N} \sum_{i=1}^{N} \|\bm{y}_{i} - \bm{\hat{y}}_{i} \|_{\infty} \leq \theta.
\end{align*}
We introduce a Lagrange multiplier $\gamma \in \R$ for the binding constraint, and our goal is now to solve
\begin{align*}
\min & \; \sum_{i=1}^{N} \langle \bm{y}_{i} , \bm{h} \rangle + \dfrac{1}{2\sigma} \| \bm{y}_{i} - \bm{y'}_{i} \|_{2}^{2} + \gamma \cdot \sum_{i=1}^{N} \|\bm{y}_{i} - \bm{\hat{y}}_{i} \|_{\infty}  \\
& \bm{y}_{1}, ..., \bm{y}_{N} \in \U.
\end{align*}
Note that this problem decomposes across $i=1, ..., N$, so that we can solve independently, for each $i$,
\begin{equation}\label{eq:prox-loc-infty-1}
\begin{aligned}
\min & \; \langle \bm{y} , \bm{h} \rangle + \dfrac{1}{2 \sigma} \|\bm{y} - \bm{y'}_{i} \|_{2}^{2} + \gamma \|\bm{y} - \bm{\hat{y}}_{i} \|_{\infty}  \\
& \bm{y} \in \U.
\end{aligned}
\end{equation}
To solve \eqref{eq:prox-loc-infty-1}, we can use bisection to find a feasible $\alpha$ such that $ \gamma \| \bm{y} - \bm{\hat{y}}_{i} \|_{\infty} \leq \alpha$.
This leads to solve
\begin{align*}
\min & \; \langle \bm{y} , \bm{h} \rangle + \dfrac{1}{2 \sigma} \|\bm{y} - \bm{y'}_{i} \|_{2}^{2}  \\
& \bm{y} \in (\Delta(S))^{A}, \\
& \gamma \| \bm{y}_{a} - \bm{\hat{y}}_{i,a} \|_{\infty} \leq  \alpha, \forall \; a \in \A.
\end{align*}
Note that this problem decomposes across each action $a$, so that we only have to solve $A$ problems of the form
\begin{align*}
\min & \; \langle \bm{y}_{i,a} , \bm{h}_{ia} \rangle + \dfrac{1}{2 \sigma} \| \bm{y}_{i,a} - \bm{y'}_{i,a} \|_{2}^{2}  \\
& \bm{y}_{i,a} \in \Delta(S), \\
& \gamma \| \bm{y}_{i,a}- \bm{\hat{y}}_{i,a} \|_{\infty} \leq \alpha.
\end{align*}
This brings down to solving the problem of Euclidean projection onto the simplex $\Delta(S)$ with box constraints, which can be done in $O(S \log(S) \log(\epsilon^{-1})$ (by relaxing the constraint $\bm{y}_{i,a}^{\top}\bm{e}=1$).
Then the overall complexity to compute an $\epsilon$-approximation of the proximal update is in $O\left( N A S \log(S) \log^{3} \left(\epsilon^{-1})\right) \right).$
\section{Complexity results for type-$\infty$ Wasserstein ball}\label{app:p-infinity}
\paragraph{Background on type-$\infty$ Wasserstein distance}
\citet{xie2020tractable}, \citet{bertsimas2019two}, \citet{bertsimas2018data} consider ambiguity sets based on type-$\infty$ Wasserstein distance with application to two-state distributionally robust optimization. Recent work suggests that distributionally robust optimization based on type-$\infty$ distance has some computational advantages compared to  DRO based on type-$p$ Wasserstein distance \cite{xie2020distributionally}.
\paragraph{Optimality of Markovian policy}
Note that \citet{yang2017convex} proves that for type $p$ Wasserstein distance (with $p<+\infty$), an optimal policy can be found Markovian. We prove here that the same holds for Wasserstein distance of $p=+\infty$. Let us define the \textit{value} vector for each state $s$ as 
\[ v_{s} = \min_{\bm{x} \in \Delta(A)} \max_{\mu_{s} \in \D_{s}}  \E_{\pi} \E_{\bm{y} \sim \mu_{s}} [\sum_{t=0}^{+ \infty} \lambda^{t} c_{s_{t}a_{t}} | \; s_{0} = s ],\]
which represents the expected reward-to-go starting from a state $s$. Note that $s \mapsto v_{s}$ is well-defined because of the $s$-rectangularity assumption \cite{Kuhn}. The Bellman equation \eqref{eq:Bellman-DR-MDP} follows from the dynamic programming principle. Now we have that
\[\bm{x} \mapsto  \max_{\mu_{s} \in \D_{s}} \E_{\bm{y}_{s} \sim \mu_{s}} \left[ \sum_{a \in \A} x_{sa} \left( r_{sa} + \lambda \bm{y}_{sa}^{\top}\bm{v}^{*} \right) \; | s_{0}=s \right] \]
is convex (as the pointwise maximum of linear functions), proper (because the costs are bounded), and upper semi-continuous. Hence the minimization problem over $\bm{x} \in \Delta(A)$ is minimizing a closed proper convex function onto the closed convex set $\Delta(A)$. Therefore an optimal solution exists, i.e. there exists an optimal Markovian policy.
\paragraph{Proximal update.}
The proximal update on the max-player becomes
\begin{equation}\label{eq:prox-update-min-player-infty}
\begin{aligned}
\min & \; \sum_{i=1}^{N} \langle \bm{y}_{i} , \bm{h} \rangle + \dfrac{1}{2\sigma} \| \bm{y}_{i} - \bm{y'}_{i} \|_{2}^{2} \\
& \bm{y}_{1}, ..., \bm{y}_{N} \in \U, \\
&d(\bm{y}_{i}, \bm{\hat{y}}_{i}) \leq \theta, \forall \; i=1,...,N.
\end{aligned}
\end{equation}
We note that this problem naturally decomposes along $i=1, ..., N$, so that we only have to solve $N$ subproblems of the form
\begin{equation}\label{eq:prox-update-min-player-infty-1}
\begin{aligned}
\min & \;  \langle \bm{y}, \bm{h} \rangle + \dfrac{1}{2 \sigma}  \| \bm{y}_{i} - \bm{y'}_{i} \|_{2}^{2} \\
& \bm{y} \in \U, \\
&d(\bm{y}, \bm{\hat{y}}_{i}) \leq \theta.
\end{aligned}
\end{equation}
If we introduce a Lagrange multiplier $\gamma$ for the last constraint, we note that we have to solve
\begin{equation}\label{eq:prox-update-min-player-infty-2}
\begin{aligned}
\min & \; \sum_{i=1}^{N} \langle \bm{y}, \bm{h} \rangle + \dfrac{1}{2 \sigma} \| \bm{y}_{i} - \bm{y'}_{i} \|_{2}^{2} + \gamma \cdot d(\bm{y}, \bm{\hat{y}}_{i}) \\
& \bm{y} \in \U.
\end{aligned}
\end{equation}
It is straightforward to use the same methods as for the proximal updates for $p< + \infty$ and $d= d_{1}, d_{2}, d_{\infty}$, which yields the following corollary of Proposition \ref{prop:finite-type}.
 \begin{corollary}\label{cor:infinite-type}
 \begin{enumerate}
 \item Let $d=d_{2}$ and $p=2$.
The proximal update \eqref{eq:prox-update-min-player-infty-2} can be computed in $O \left(NA S \log(S) \log(\epsilon^{-1}) \right)$ arithmetics operations.

\item Let $d=d_{1}$ and $p=1$.
The proximal update \eqref{eq:prox-update-min-player-infty-2} can be computed in $O \left(NA S \log(S) \log(\epsilon^{-1}) \right)$ arithmetics operations.
\item 
Let $d= d_{\infty}$ and $p=1$.
The proximal update \eqref{eq:prox-update-min-player-infty-2} can be computed in $O \left(NA S \log(S) \log^{3}(\epsilon^{-1}) \right)$ arithmetics operations.
 \end{enumerate}
\end{corollary}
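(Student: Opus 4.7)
The plan is to reduce the proximal update \eqref{eq:prox-update-min-player-infty} to $N$ independent single-sample subproblems of the form \eqref{eq:prox-update-min-player-infty-1}, and then to reuse, essentially verbatim, the machinery developed in the proof of Proposition \ref{prop:finite-type}. First I would observe that \eqref{eq:prox-update-min-player-infty} is completely separable across $i=1,\ldots,N$: both the objective and the constraints $\bm{y}_{i}\in\U$ and $d(\bm{y}_{i},\hat{\bm{y}}_{i})\le \theta$ decouple. This is the main structural distinction from the type-$p$ case of Proposition \ref{prop:finite-type}, where the single averaged constraint $\tfrac{1}{N}\sum_{i}d(\bm{y}_{i},\hat{\bm{y}}_{i})^{p}\le \theta^{p}$ couples all samples. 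Consequently it suffices to solve one subproblem \eqref{eq:prox-update-min-player-infty-1} at cost $O(AS\log S\log\epsilon^{-1})$ (resp.\ $O(AS\log S\log^{3}\epsilon^{-1})$ for $d=d_{\infty}$) and then multiply by $N$.

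Next, for a fixed $i$ I would dualize the single constraint $d(\bm{y},\hat{\bm{y}}_{i})\le \theta$ with a scalar multiplier $\gamma_{i}\ge 0$, producing exactly \eqref{eq:prox-update-min-player-infty-2}. For $d=d_{2}$, $p=2$ I would equivalently dualize the squared form $\|\bm{y}-\hat{\bm{y}}_{i}\|_{2}^{2}\le \theta^{2}$, so that the penalty becomes a convex quadratic; the resulting Lagrangian then decouples across the $A$ actions into Euclidean projections onto $\Delta(S)$, each costing $O(S\log S)$, exactly as in the $d=d_{2}$ branch of Proposition \ref{prop:finite-type}. For $d=d_{1}$, $p=1$ I would additionally introduce Lagrange multipliers $\alpha_{i,a}$ for each simplex constraint and reuse the closed-form shrinkage \eqref{eq:y-shrinkage} together with the decreasing-breakpoint scan of Appendix \ref{app:computing-FOM}, yielding an $O(S\log S)$ cost per action. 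For $d=d_{\infty}$, $p=1$ I would run the inner bisection over the $\ell_{\infty}$ radius and then project onto the simplex intersected with the corresponding box, at a cost of $O(S\log S\log\epsilon^{-1})$ per action.

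Finally I would wrap the above with an outer bisection on $\gamma_{i}$ to enforce the primal constraint $d(\bm{y},\hat{\bm{y}}_{i})\le \theta$; monotonicity of the optimal value with respect to $\gamma_{i}$ (which follows from convexity of the subproblem) guarantees geometric convergence of this bisection, and thus contributes one additional $\log\epsilon^{-1}$ factor. Multiplying the per-$i$ cost by $N$ gives $O(NAS\log S\log\epsilon^{-1})$ for $d\in\{d_{1},d_{2}\}$ and $O(NAS\log S\log^{3}\epsilon^{-1})$ for $d=d_{\infty}$, which are exactly the three bounds claimed.

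The main obstacle will be to verify that the inner algorithms in Proposition \ref{prop:finite-type}, which were originally designed for a penalty proportional to $d(\bm{y},\hat{\bm{y}}_{i})^{p}$, still go through in the present setting where the penalty is proportional to $d(\bm{y},\hat{\bm{y}}_{i})$ itself. For $p=1$ (the cases $d=d_{1}$ and $d=d_{\infty}$) the two penalties coincide, so nothing changes. For $p=2$ ($d=d_{2}$) the equivalent squared-constraint reformulation mentioned above reduces the Lagrangian to the exact form already handled in Proposition \ref{prop:finite-type}. Hence no new inner solver is required, and the per-sample complexity of Proposition \ref{prop:finite-type} transfers directly, with the summation over $N$ replaced by the trivial decomposition, proving the corollary.
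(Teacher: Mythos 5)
Your proposal is correct and follows essentially the same route as the paper's own proof: decompose \eqref{eq:prox-update-min-player-infty} across $i=1,\ldots,N$ (possible precisely because the per-sample constraints $d(\bm{y}_{i},\bm{\hat{y}}_{i})\le\theta$ do not couple the samples, unlike the averaged constraint in Proposition \ref{prop:finite-type}), dualize the single ball constraint to obtain \eqref{eq:prox-update-min-player-infty-2}, and reuse the inner solvers of Proposition \ref{prop:finite-type} together with an outer bisection on the multiplier. Your one refinement --- dualizing the equivalent squared constraint $\| \bm{y}-\bm{\hat{y}}_{i} \|_{2}^{2}\le\theta^{2}$ in the $d=d_{2}$ case so that the penalty is quadratic and matches the form actually handled in Proposition \ref{prop:finite-type}, rather than an unsquared norm penalty --- is a welcome precision at a step the paper dismisses as ``straightforward.''
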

The corresponding convergence rates for Algorithm \ref{alg:PD-RMDP} with $p = + \infty$ are given in Theorem \ref{th:conv-rates}.

\section{Computing the duality gap}\label{app:DG}
Remember that the duality gap in \eqref{eq:DR-MDP}  is defined  as
\[ \max_{\mu \in \D} C(\bm{x},\mu) - \min_{\bm{x'} \in \Pi} C(\bm{x'},\mu).\]
Following \citet{yang2017convex}, $\max_{\mu \in \D} C(\bm{x},\mu) $ can be computed by finding the fixed point of the following operator, which is a contraction of factor $\lambda$:
$F^{\bm{x}}(\bm{v})_{s} = \max_{\mu \in \D_{s}} \E_{\bm{y} \sim \mu} \left[\sum_{a=1}^{A} x_{sa} \left( c_{sa} + \lambda \bm{y}^{\top}\bm{v} \right) \right], \forall \; s \in \X.$
Moreover, computing $\min_{\bm{x'} \in \Pi} C(\bm{x'},\mu)$ is equivalent to solving the (nominal) MDP with fixed density $\mu \in \D$. This can be solved by iterating the following contraction:
$ F^{\bm{y}}(\bm{v})_{s} = \min_{\bm{x_{s}} \in \Delta(A)} \E_{\bm{y} \sim \mu} \left[ \sum_{a=1}^{A} x_{sa} \left( c_{sa} + \lambda \bm{y}^{\top}\bm{v} \right) \right], \forall \; s \in \X.$

We present in the next figure the running times to compute \eqref{eq:DG-RMDP} up to $\epsilon=0.25$, using the numerical setup of our numerical experiments for Garnet MDPs. We present our results for $\lambda=0.8$. We notice that computing \eqref{eq:DG-RMDP} quickly becomes  slow. Therefore, in our experiments we focus on computing \eqref{eq:DG-RMDP} for $S,A,N$ smaller than $70$. 
\begin{figure}[H]
     \centering
         \includegraphics[scale=0.4]{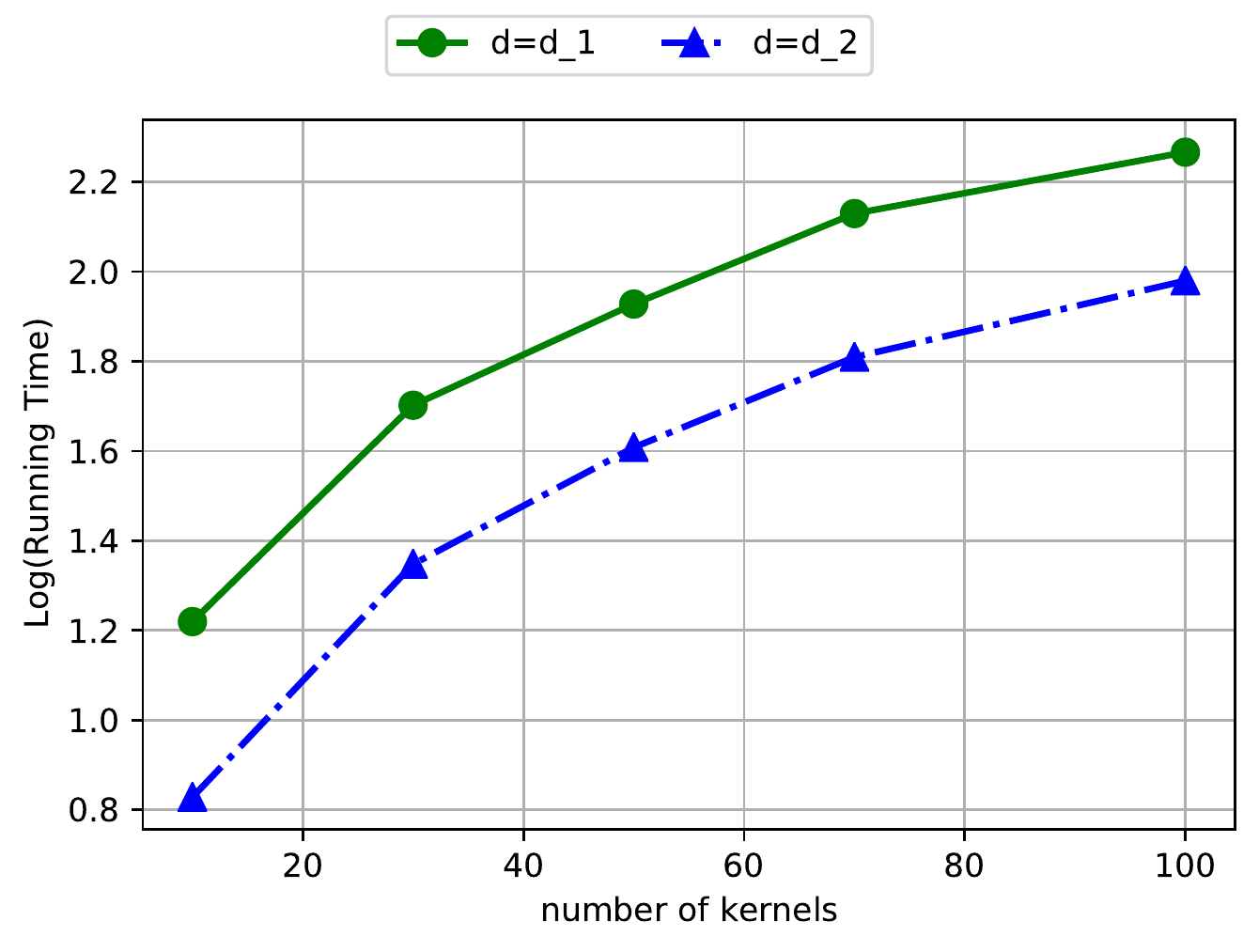}
         \caption{Running times for computing the duality gap \eqref{eq:DG-RMDP}, for increasing number of kernels (while $S,A=10$).}
           \label{fig:DG_kernels}

     \end{figure}
\begin{figure}[H]
         \centering
         \includegraphics[scale=0.4]{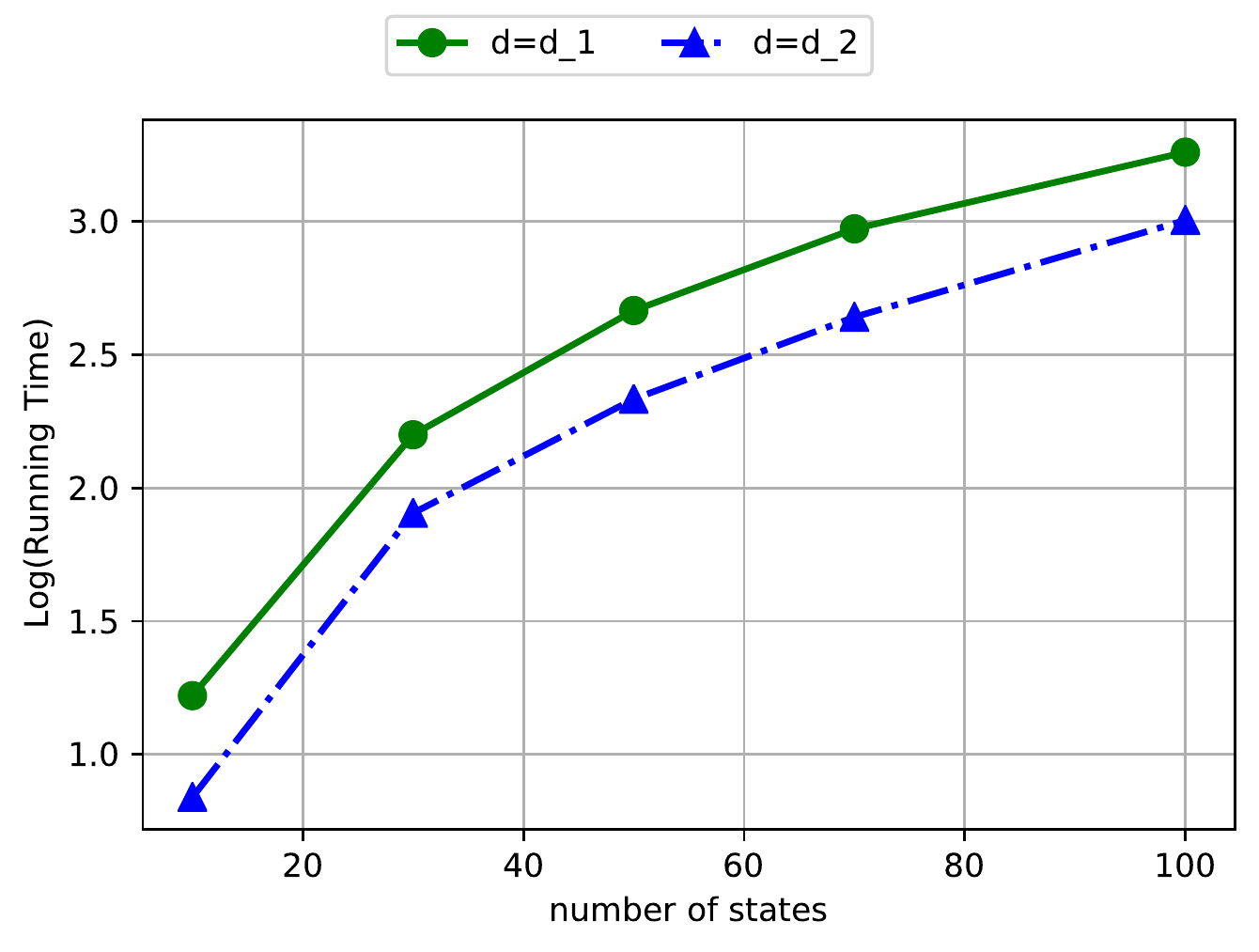}
         \caption{States.}
         \label{fig:DG_states}
        \caption{Running times for computing the duality gap \eqref{eq:DG-RMDP}, for increasing number of states (while $N,A=10$).}
\end{figure}
We also note here that the duality gap is slower to compute for $d=d_{1}$ (where the Bellman update brings down to a large linear program) than for $d=d_{2}$ (where the Bellman update brings down to a convex program with less variables than for $d=d_{1}$ but one additional quadratic constraints). Note that in the case of $d=d_{1}$, $NAS$ additional variables have to be introduced to model the absolute values $| y_{i,a,s'} - \hat{y}_{i,a,s'}|$ for all $i=1, ..., N, a \in \A, s' \in \X$; this is probably what causes the Bellman update with $d=d_{2}$ to be faster, even if it introduces a (single) quadratic constraint.

\section{Details on numerical implementations}\label{app:simu}
\paragraph{Estimating the Bellman operator.} In order to obtain $F(\bm{v})$, we use the reformulation \eqref{eq:T_v_simu_final} and solve it using Gurobi 8.1.1 for Python 3.7.3.  Following \citet{BenTal-Nemirovski}, we can solve \eqref{eq:T_v_simu_final} up to accuracy $\epsilon$ in a number of arithmetic operations in $O\left(S^{3.5}A^{3.5}\log(1/\epsilon) \right).$ 
For VI, AVI, Anderson and GSVI, we warm-start the computation of $F(\bm{v}^{\ell})$ with the previous solution obtained from solving $F(\bm{v}^{\ell-1})$. 

\paragraph{Computing uncertainty sets.} In order to obtain the $N$ transition kernels $\left( \bm{\hat{y}}_{i} \right)_{i=1}^{N}$, we sample some \textit{random} (Garnet) deviations around the true nominal kernel $\bm{y}^{0}$. In particular, we sample $N$ Garnet MDP instances $\bm{y}_{1}, ...., \bm{y}_{N}$ with $n_{b} = 0.05$ (very low level of connectivity), and we consider $\bm{\hat{y}}_{1}, ..., \bm{\hat{y}}_{N}$ as
\[ \bm{\hat{y}}_{i} = 0.95 \bm{y}^{0} + 0.05 \bm{y}_{i}, i = 1,...,N.\]
This way, $\left( \bm{\hat{y}}_{i} \right)_{i=1}^{N}$ represent $N$ kernels, obtained as small (random) errors from the true transition kernel $\bm{y}^{0}$. The nominal kernel for the machine replacement and the forest management instances are given in the next appendices.

For the machine replacement and the forest management problems, we build an uncertainty set of the form \eqref{eq:ambiguity-set-wasserstein} with $\theta = 0.5$.
We choose to present our results for $\theta=0.5$ as they are representative of our results for other choices ($\theta \in \{0.1,0.5,1,2\}$). 
As the Garnet MDPs have denser transitions, we choose $\theta = \sqrt{n_{b}A}$ as the radius for the Wasserstein balls.

\paragraph{Accelerated Value Iteration.} The algorithm AVI  \cite{GGC, akian2020multiply} is a simple variation of VI, inspired from acceleration scheme from convex optimization \cite{nesterov-1983,nesterov-book}.  In particular, for any sequences of scalar $(\alpha_{s})_{s \geq 0}$ and $(\gamma_{s})_{s \geq 0} \in \R^{\N}$, Accelerated Value Iteration (AVI) is defined as
\begin{equation}\label{alg:AVI}\tag{AVI}
\bm{v}_{0},\bm{v}_{1} \in \R^{S}, \begin{cases}
    \bm{h}_{t}=\bm{v}_{t}+\gamma_{t}\cdot \left( \bm{v}_{t}-\bm{v}_{t-1} \right), \\
	\bm{v}_{t+1} \gets \bm{h}_{t}-\alpha_{t} \left( \bm{h}_{t}- F \left( \bm{h}_{t} \right) \right), \end{cases} \forall \; t \geq 1.
\end{equation}
Following \cite{GGC}, we choose step sizes as\begin{equation*}
\alpha_{s}  = \alpha = 1/(1+\lambda),
 \gamma_{s} = \gamma = \left(1-\sqrt{1- \lambda^{2}}\right)/\lambda, \forall s \; \geq 1.
 \end{equation*}

\paragraph{Gauss-Seidel Value Iteration.}  Gauss-Seidel Value Iteration (GS-VI) is a popular asynchronous variant of \ref{alg:VI} \citep{Puterman}, where 
$
v^{t+1}_{s} = \max_{a \in \A} \min_{\bm{y} \in \B_{p,s} } c_{sa} + \lambda \cdot \sum_{s'=1}^{s-1} y_{sas'}v^{t+1}_{s'} + \lambda \cdot \sum_{s'=s}^{n} y_{sas'}v^{t}_{s'}.$

\paragraph{Anderson Value Iteration.} This algorithm \cite{ref-c}, inspired from quasi-Newton methods from convex optimization,  updates $\bm{v}^{t+1}$ as a linear combination of the last $(m+1)$-iterates $F(\bm{v}^{t}), ..., F(\bm{v}^{t-m})$:
\[ \bm{v}^{t+1} = \sum_{i=0}^{m} \alpha_{i} F(\bm{v}^{t-m+i}),\]
for some weights $\alpha_{0}, ..., \alpha_{m}$.
The weights $\bm{\alpha} \in \R^{m+1}$ are updated at every iteration, see Algorithm 1 and Equation (1) in \cite{ref-c} for further details. There is no heuristics for choosing $m$; we choose $m=5$ in our numerical experiments.

\section{Details on machine replacement example}\label{app:machine}
We present an example of this instance in Figure \ref{fig:Machine-MDP-1}-\ref{fig:Machine-MDP-2}, where there are $10$ states: 8 states related to the condition of the machine, and two repair states. The instances for larger number of states are constructed in the same fashion by adding some condition states for the machine.  Below we give details about the states, actions, transitions and rewards.

\paragraph{States.} The machine replacement problem involves a machine whose set of possible conditions are described by $S$ states.  The first $S-2$ states are operative states.   The states $1$ to $S-2$ model the condition of the machine,  with $1$ being perfect condition and $S-2$ being worst condition.  The last two states $S-1$ and $S$ are states representing when the machine is being repaired.   The initial distribution is uniform across states. 

\paragraph{Actions.} There are two actions: \textit{repair} and \textit{no repair}.

\paragraph{Transitions.} The transitions are detailed in Figures \ref{fig:Machine-MDP-1}-\ref{fig:Machine-MDP-2}.  When the action is \textit{no repair}, the machine is likely to deteriorates toward the state $S-2$, or may stay in the same condition. When the action is \textit{repair}, the decision-maker brings the machine to the states $S-1$ and $S-2$.

\paragraph{Rewards.}There is a cost of 0 for states $1, ..., S-3$; letting the machine reach the worst operative state $S-2$ is penalized with a cost of $20$. 
The state $S-1$ is a standard repair state and has a cost of 2, while the last state $S$ is a longer and more costly repair state and has cost 10. 

\begin{figure}[H]
     \centering
         \includegraphics[scale=0.15]{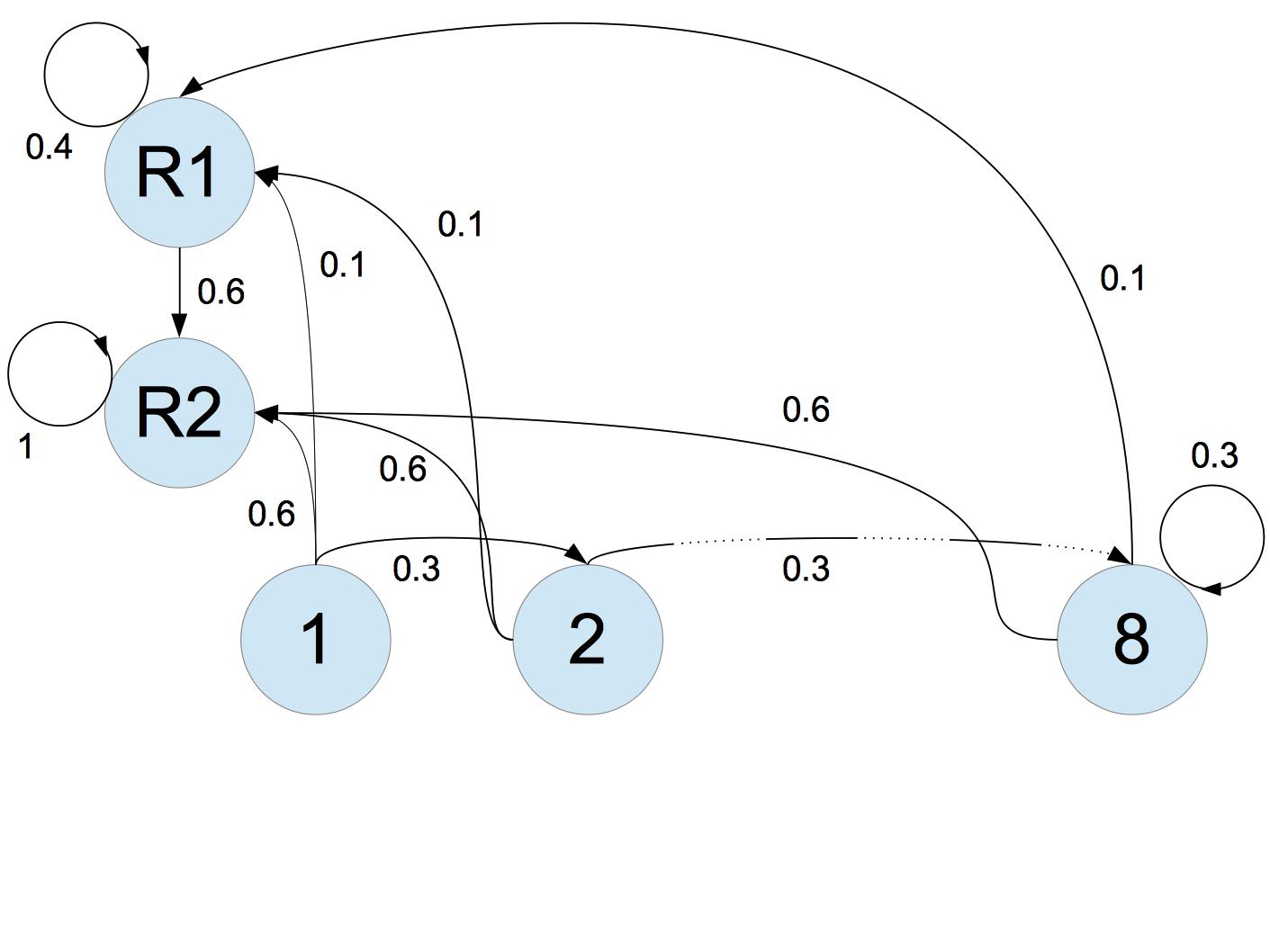}
         \caption{Nominal transition for action = \textit{ repair } in our machine replacement MDP.}
           \label{fig:Machine-MDP-1}
     \end{figure}
\begin{figure}[H]
         \centering
         \includegraphics[scale=0.15]{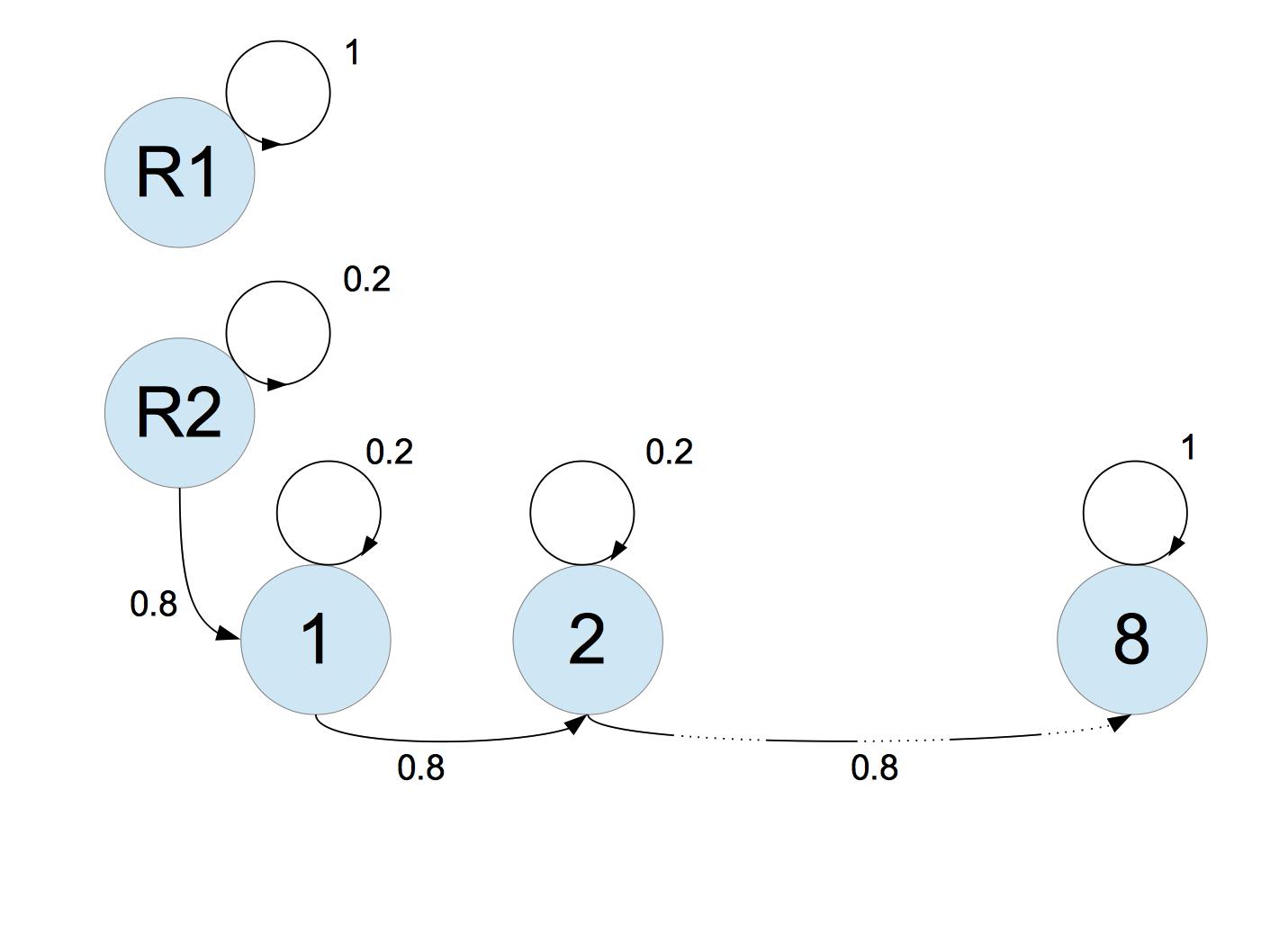}
         \caption{Nominal transition for action = \textit{ no repair } in our machine replacement MDP.}
         \label{fig:Machine-MDP-2}
\end{figure}

\section{Details on forest management example}\label{app:forest}
The state in the forest management example represents the growth of the forest. The goal is to find the right balance between maintaining the forest,  making money by selling cut wood. Every year, the forest may suffer from wildfires. A complete description may be found at \cite{pymdp}.  This is inspired from the application of dynamic programming to optimal fire management \cite{possingham1997application}.
\paragraph{States.} There are $S$ states. The state $1$ is the youngest state for the forest. The forest can not grow beyond state $S$. The initial distribution is uniform across states. 
\paragraph{Actions.} There are two actions, \textit{wait} and \textit{cut \& sell}.
\paragraph{Transitions. } If the forest is in a state $s$ and the action is \textit{wait}, the next state is $s+1$ with probability $1-p$ (the forest grows) and $1$ with probability $p$ (a wildfire burns the forest down). If the forest is in a state $s$ and the action is \textit{cut \& sell}, the next state is $1$ with probability $1$. The probability of wildfire $p$ is chosen at $p=0.1$.
\paragraph{Rewards.} There is a reward of $4$ when the forest reaches the oldest state ($S$) and the chosen action is \textit{wait}.  There is a reward of $0$ at every other state if the chosen action is \textit{wait}. When the action is \textit{cut \& sell}, the reward at the youngest state $s=1$ is $0$, there is a reward of $1$ in any other state $s \in \{1, ..., S-1\}$, and a reward of $2$ in $s=S$. We convert all rewards to cost by flipping the signs of the rewards. 

\end{document}